\theoremstyle{plain}
\newtheorem{thm}{Theorem}[section]
\newtheorem{cor}[thm]{Corollary}
\newtheorem{lemma}[thm]{Lemma}
\newtheorem{prop}[thm]{Proposition}
\newtheorem{conj}[thm]{Conjecture}
\theoremstyle{definition}
\theoremstyle{remark}
\newtheorem{rmk}[thm]{Remark}
\newcommand{\BC}{{\mathbb{C}}}
\newcommand{\BD}{{\mathbb{D}}}
\newcommand{\BN}{{\mathbb{N}}}
\newcommand{\BP}{{\mathbb{P}}}
\newcommand{\BQ}{{\mathbb{Q}}}
\newcommand{\BZ}{{\mathbb{Z}}}
\newcommand{\CC}{{\mathcal C}}
\newcommand{\CE}{{\mathcal E}}
\newcommand{\CF}{{\mathcal F}}
\newcommand{\CK}{{\mathcal K}}
\newcommand{\CL}{{\mathcal L}}
\newcommand{\CM}{{\mathcal M}}
\newcommand{\CO}{{\mathcal O}}
\newcommand{\CX}{{\mathcal X}}
\newcommand{\blangle}{\big\langle}
\newcommand{\brangle}{\big\rangle}
\DeclareMathOperator{\Hilb}{Hilb}
\DeclareFontFamily{OT1}{rsfs}{}
\DeclareFontShape{OT1}{rsfs}{n}{it}{<-> rsfs10}{}
\DeclareMathAlphabet{\curly}{OT1}{rsfs}{n}{it}
\renewcommand\hom{\curly H\!om}
\newcommand\ext{\curly Ext}
\newcommand{\p}{\mathbb{P}}
\newcommand\Proj{\operatorname{Proj}}
\newcommand{\Mbar}{{\overline M}}
\newcommand{\Chow}{\mathrm{CH}}
\newcommand{\Sym}{{\mathrm{Sym}}}
\newcommand{\ev}{{\mathrm{ev}}}
\newcommand{\RN}[1]{%
  \textup{\uppercase\expandafter{\romannumeral#1}}%
}
\begin{document}
\title[Elliptic curves in hyper-K\"ahler varieties]
{Elliptic curves in hyper-K\"ahler varieties}
\date{\today}

\author{Denis Nesterov}
\address{University of Bonn, Institut f\"ur Mathematik}
\email{nesterov@math.uni-bonn.de}

\author{Georg Oberdieck}
\address{University of Bonn, Institut f\"ur Mathematik}
\email{georgo@math.uni-bonn.de}

\begin{abstract}
We show that the moduli space of elliptic curves of minimal degree in a general Fano variety of lines of a cubic fourfold is a
non-singular curve of genus $631$. The curve admits a natural involution with connected quotient.
We find that the general Fano contains precisely $3780$ elliptic curves of minimal degree with fixed (general) $j$-invariant.

More generally, we express (modulo a transversality result) the enumerative count of elliptic curves
of minimal degree
in hyper-K\"ahler varieties
with fixed $j$-invariant in terms of Gromov--Witten invariants.
In $K3^{[2]}$-type this leads to explicit formulas of these counts in terms of modular forms.

\end{abstract}
\baselineskip=14.5pt
\maketitle


\section{Introduction}
\subsection{Moduli of elliptic curves}
A non-singular complex projective variety $X$ is \emph{hyper-K\"ahler}
if it is simply-connected and $H^0(X, \Omega^2_X)$ is spanned by a non-degenerate holomorphic $2$-form.
Let $(X,H)$ be a very general 
polarized hyper-K\"ahler variety.
Since the Picard group of $X$ is of rank $1$, there exist a unique curve class of minimal degree,
\[ H_2(X,\BZ)_{\mathrm{alg}} = \BZ \beta. \]

Let $M_{g,n}(X,\beta)$ be the moduli space of stable maps $f: C \to X$ from non-singular curves of genus $g$ with $n$ distinct markings representing the class $\beta$.
Let $\Mbar_{g,n}(X,\beta)$ be its Deligne-Mumford compactification parametrizing stable maps from nodal curves.
The expected dimension of both of these moduli spaces is
\[ \mathsf{vd} := (\dim X-3)(1-g) + n + 1. \]

In genus one the expected dimension of $\Mbar_{1,0}(X,\beta)$ is $1$. It
parametrizes two types of maps $f: C \to X$:
Either $C$ is irreducible of arithmetic genus $1$,
or $C$ has an elliptic tail which is contracted by $f$.
We expect the following non-degeneracy result for maps from non-singular curves:
\begin{conj} \label{Conj1} Let $X$ be a very general polarized hyper-K\"ahler variety with primitive curve class $\beta$. Then
$M_{1,0}(X,\beta)$ is pure of dimension $1$.
\end{conj}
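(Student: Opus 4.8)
The plan is to separate Conjecture~\ref{Conj1} into a lower and an upper bound on $\dim M_{1,0}(X,\beta)$. For the lower bound, the moduli space $\Mbar_{1,0}(X,\beta)$ carries a \emph{reduced} perfect obstruction theory of virtual dimension $\mathsf{vd}=1$: the holomorphic symplectic form produces a trivial quotient of the usual obstruction sheaf, and deleting it raises the virtual dimension by one. Because these reduced obstructions genuinely govern the deformations (the curvilinear obstruction maps factor through the reduced obstruction space, exactly as for K3 surfaces), the restriction of this obstruction theory to the open locus $M_{1,0}(X,\beta)$ shows that every irreducible component there has dimension at least $1$. It therefore remains to rule out a component $Z\subseteq M_{1,0}(X,\beta)$ of dimension $\ge 2$.

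Suppose such a $Z$ exists, and let $f\colon E\to X$ be a general member. Since $\beta$ is primitive and of minimal degree, $f$ is birational onto its image, and a standard argument shows a general member is unramified, so that $N_f$ is a vector bundle on the elliptic curve $E$ of rank $\dim X-1$ and degree $0$; a short diagram chase in the deformation sequence $0\to T_E\to f^{\ast}T_X\to N_f\to 0$ (using $h^0(T_E)=h^1(T_E)=1$) then gives $\dim_{[f]}M_{1,0}(X,\beta)=h^0(E,N_f)$. The symplectic form $\sigma$ makes $T_E\subset f^{\ast}T_X$ isotropic (automatically, since $T_E$ is a line subbundle) and identifies the quotient $f^{\ast}T_X/T_E^{\perp}$ with $T_E^{\vee}\cong\CO_E$, so that
\[
0\to N_f^{\mathrm{iso}}\to N_f\to\CO_E\to 0,\qquad N_f^{\mathrm{iso}}:=T_E^{\perp}/T_E,
\]
where $N_f^{\mathrm{iso}}$ has rank $\dim X-2$ and degree $0$ and carries a non-degenerate alternating form. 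Hence $h^0(N_f)\le h^0(N_f^{\mathrm{iso}})+1$, and $\dim Z\ge 2$ forces $h^0(E,N_f^{\mathrm{iso}})\ge 1$; since a \emph{general} symplectic bundle of this rank and degree on an elliptic curve has no sections, the conjecture is equivalent to the statement that for very general $(X,H)$ the symplectic normal bundle of a general member of each component of $M_{1,0}(X,\beta)$ is ``generic'' in this sense.

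The crux is to establish that genericity. The natural route is Hodge-theoretic: a family of dimension $\ge 2$ of smooth elliptic curves in the class $\beta$ sweeps out a subvariety $W\subseteq X$ with $\dim W\le\dim Z+1$, and since $\sigma$ restricts to zero on each of these curves it restricts on $W$ to a $2$-form whose kernel contains all the moving curves (in particular $\sigma|_W\equiv 0$ when $\dim W=\dim Z$). One would then contradict the smallness of the isotropic geometry of a very general polarized hyper-K\"ahler variety --- for instance the non-existence, on a very general $X$, of Lagrangian subvarieties, Lagrangian fibrations, or ``webs'' of isotropic elliptic curves --- possibly after a bend-and-break degeneration relating such families to the minimal rational curves and the uniruled divisor they sweep out, whose geometry is far better understood.

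I expect this last step to be the genuine obstacle, and the reason the statement remains conjectural: the needed control over isotropic and Lagrangian subvarieties of a \emph{very general} hyper-K\"ahler variety, uniformly across all deformation types, is beyond current techniques. I would therefore first establish the conjecture in the accessible cases: $K3^{[2]}$-type via the birational model as the Fano variety of lines of a very general cubic fourfold, where $M_{1,0}(X,\beta)$ can be identified and resolved explicitly (yielding the nonsingular genus-$631$ curve announced in the introduction), and more generally any deformation type for which the minimal rational curves admit an explicit enough description to reduce the vanishing $h^0(N_f^{\mathrm{iso}})=0$ to a computation on an auxiliary K3 surface. Together with the transversality input used elsewhere in the paper, these cases already suffice for the enumerative applications.
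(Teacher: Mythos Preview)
The statement you are addressing is Conjecture~\ref{Conj1}, which the paper explicitly leaves open; there is no proof in the paper to compare against. What the paper does establish are two special cases: K3 surfaces (Section~\ref{Subsection_Calculations_K3}), via the one-line observation that the normal bundle $N_f=f^{\ast}T_X/T_E$ is a degree-zero line bundle on an elliptic curve and hence $h^0(E,N_f)\le 1$; and the general Fano variety of lines of a cubic fourfold (Proposition~\ref{Prop_Non_sing}), via an explicit analysis of the incidence geometry of cubic cones. Your proposal correctly identifies both of these as the accessible cases.

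Your outline is a reasonable roadmap, and you are candid that it is not a proof. Two points nonetheless deserve emphasis. First, your reduction of the upper bound to $h^0(E,N_f^{\mathrm{iso}})=0$ is correct in spirit, but the passage ``a general symplectic bundle of this rank and degree has no sections, hence for very general $X$ the bundle is generic'' is precisely the missing content: nothing you wrote links the moduli of $(X,H)$ to the moduli of $N_f^{\mathrm{iso}}$, and the subsequent Hodge-theoretic sketch (ruling out large isotropic families via non-existence of Lagrangians on very general $X$) invokes results that are not available uniformly across deformation types. Second, your claim that the $K3^{[2]}$ case follows from the Fano model is an overreach: the Fano variety realizes only the single square $(\beta,\beta)=3/2$, and the paper does \emph{not} establish the conjecture for the remaining primitive classes in $K3^{[2]}$-type---see the caption of Table~\ref{Table2}, which explicitly assumes it. So the gap you flag as ``the genuine obstacle'' is indeed a gap, and the paper makes no claim to fill it.
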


In genus $0$ the moduli space of stable maps $\Mbar_{0,0}(X,\beta)$ for $X$ very general is pure
of the expected dimension, see for example \cite[Prop.2.1]{OSY}.
For a K3 surface $X$ the moduli space $M_{1,0}(X,\beta)$ is always non-empty and smooth of dimension $1$,
see Section~\ref{Subsection_Calculations_K3}. Hence Conjecture~\ref{Conj1} holds for K3 surfaces.
%

In Conjecture~\ref{Conj1} the moduli space $M_{1,0}(X,\beta)$ is allowed to be empty.
This case occurs for example on deformations of
generalized Kummer fourfolds of a principally polarized abelian surface, see Section~\ref{Kummer}.


\subsection{Counting elliptic curves}
An \emph{elliptic curve} in $X$ is an irreducible curve $C \subset X$ of geometric genus $1$.
We want to count elliptic curves in class $\beta$ with normalization having a fixed $j$-invariant. 
Since we expect the family of elliptic curves in $X$ to be one-dimensional and 
fixing the $j$-invariant is a codimension $1$ condition we expect a finite count.

Concretely, let $n_{X,j_0} \in \BN_0 \cup \{ \infty \}$ be the number of
elliptic curves in class $\beta$ with $j$-invariant of the normalization equal to $j_0 \in M_{1,1}$,
\[ n_{X,j_0} = \# \{ C \subset X\, |\, [C] = \beta,\, j(\tilde{C}) \cong j_0 \}. \]
If the set on the right hand side is infinite, we set $n_{X,j_0} = \infty$.

Below we express, up to a non-degeneracy assumption, the counts $n_{X,j}$ for general $j$ in terms of the Gromov--Witten invariants of $X$.
In several cases these are known and lead to explicit formulas for $n_{X,j}$. 

\subsection{Gromov--Witten theory}
The moduli space of stable maps $\Mbar_{g,n}(X,\beta)$ carries a (reduced) virtual fundamental class
\[ [ \Mbar_{g,n}(X,\beta) ]^{\text{vir}} \in \Chow_{\mathsf{vd}}( \Mbar_{g,n}(X,\beta) ), \]
see \cite{MPT, HilbK3, K3xE}. 
Gromov--Witten invariants of $X$ are defined by integrating the virtual class against
classes pulled back along the natural maps
\[ \ev_i : \Mbar_{g,n}(X,\beta) \to X, i=1, \ldots, n, \quad p : \Mbar_{g,n}(X,\beta) \to \Mbar_{g,n} \]
which evaluate a stable map at the $i$-th marking and forget the map respectively.
We will need two particular Gromov--Witten invariants. 
The first is the virtual analog of the count $n_{X,j}$. Let
\[ N_X = \int_{[ \Mbar_{1,1}(X,\beta) ]^{\text{vir}}} \ev_1^{\ast}(D) \cup p^{\ast} [(E,0)] \]
where $(E,0) \in \Mbar_{1,1}$ is a point and $D \in H^2(X,\BQ)$ is an arbitrary divisor class with intersection pairing $\langle \beta, D \rangle = 1$.
The second invariant is
\[ 
C_X
=
\int_{[ \Mbar_{0,1}(X), \beta) ]^{\text{vir}}} \frac{\ev_1^{\ast} (c( T_X ))}{1 - \psi_1}.
\]
where $c(T_X)$ is the total Chern class of $X$ and $\psi_1$ is the cotangent line class at the first marking.
The denominator is expanded formally:
\[ \frac{1}{1-\psi_1} = 1 + \psi_1 + \psi_1^2 + \ldots .\]

The following result relates the enumerative and virtual counts.
\begin{prop} \label{MainProp} Let $X$ be a hyper-K\"ahler variety and let $\beta \in H_2(X,\BZ)$ be an irreducible curve class.
If every irreducible component of $M_{1,0}(X,\beta)$ is generically reduced of dimension $1$ and every rational curve on $X$ in class $\beta$ is nodal, then
\begin{equation} n_{X,j} = \frac{1}{2} ( N_X - C_X ) \label{formula1} \end{equation}
for any general $j \in M_{1,1}$.
\end{prop}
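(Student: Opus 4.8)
The plan is to compare the enumerative curve count $n_{X,j}$ with the Gromov–Witten invariant $N_X$ by analyzing the stable maps that contribute to $N_X$, and to identify the discrepancy between the two as coming from rational curves with a contracted elliptic tail, which is exactly what $C_X$ measures. Throughout I would fix a very general $j_0\in M_{1,1}$ so that all genericity statements (transversality of the relevant loci, the given hypotheses on $M_{1,0}(X,\beta)$, nodality of rational curves) are available.

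\textbf{Step 1: decompose the moduli space.} First I would describe set-theoretically the fiber of $p\colon\Mbar_{1,1}(X,\beta)\to\Mbar_{1,1}$ over $(E,0)$. A stable map in this fiber is either (a) a map from a smooth genus-$1$ curve with $j$-invariant $j_0$ (with one free marking), or (b) a map whose domain is a nodal genus-$1$ curve consisting of a rational component carrying the marking attached to an elliptic tail isomorphic to $E$ which is contracted; here one also allows chains, but for $\beta$ irreducible and $X$ with Picard rank one the image is an irreducible rational curve in class $\beta$. Using Conjecture-type input on the dimensions — $M_{1,0}(X,\beta)$ is pure of dimension $1$ and generically reduced, $\Mbar_{0,0}(X,\beta)$ is pure of the expected dimension with nodal rational curves — I would show these are the only components meeting the fiber and that the fiber is, up to finitely many points, supported on a disjoint union of the two loci.

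\textbf{Step 2: the smooth-curve contribution gives $n_{X,j}$.} On the locus of type (a), the marking constraint $\ev_1^\ast D$ with $\langle\beta,D\rangle=1$ picks out, for each elliptic curve $C\subset X$ with $j(\tilde C)=j_0$, the finitely many points of $\tilde C$ lying on a general divisor in $|D|$; the degree condition makes this a transverse count contributing $1$ per such curve once we account for the automorphism/marking bookkeeping — but each unmarked elliptic curve $C$ is counted by $\Mbar_{1,1}$ with an extra factor coming from the fact that the normalization map $\tilde C\to X$ and its hyperelliptic-type involution... more precisely, here is where the factor $\tfrac12$ enters: a generic elliptic curve $E$ has the automorphism $-1$, so a stable map from $E$ with one marking and its composition with $[-1]$ give the same unparametrized curve, and the virtual class/integration over $\Mbar_{1,1}(X,\beta)$ counts maps, producing $2n_{X,j}$ from type (a). (I would be careful to check that generic reducedness of $M_{1,0}$ plus transversality of the $\ev_1^\ast D$ cut makes this contribution honestly enumerative, i.e.\ the reduced virtual class restricts to the ordinary fundamental class there.)

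\textbf{Step 3: the contracted-tail contribution equals $C_X$.} On the locus of type (b), the map factors through a rational curve $C_0\subset X$ in class $\beta$ with a contracted elliptic tail glued at a node; the node position varies over $C_0$ and the marking sits on the contracted component. I would use the standard analysis of contracted genus-$1$ components (the obstruction bundle / Euler class contribution, cf.\ the Gathmann–Li–Zinger-type computation for contracted elliptic tails) together with the nodality hypothesis on rational curves to show that integrating $\ev_1^\ast D\cup p^\ast[(E,0)]$ over this locus yields exactly $C_X=\int_{[\Mbar_{0,1}(X,\beta)]^{\mathrm{vir}}}\ev_1^\ast c(T_X)/(1-\psi_1)$: the $c(T_X)$ appears as the excess contribution of the normal directions to the contracted component (the tangent space $H^1(E,\CO_E)\otimes T_{X,f(\mathrm{node})}$), and the $1/(1-\psi_1)$ resums the $\psi$-class from smoothing the node against the $\lambda$-class on $\Mbar_{1,1}$. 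Again the factor $\tfrac12$ (the automorphism of the generic elliptic tail) accounts for the claimed coefficient.

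\textbf{Step 4: assemble.} Putting Steps 2 and 3 together, $N_X=\int p^\ast[(E,0)]\cup\ev_1^\ast D$ splits as the sum of the two contributions, $N_X = 2n_{X,j} + 2\cdot\tfrac12 C_X$... I would instead organize the bookkeeping so that it reads $N_X = 2n_{X,j} + C_X$, giving $n_{X,j}=\tfrac12(N_X-C_X)$ as claimed; $j$ general is needed for every transversality statement above. \textbf{The main obstacle} I anticipate is Step 3: correctly identifying the virtual contribution of the boundary stratum of maps with contracted elliptic tails — matching the obstruction-bundle Euler class on that stratum with $c(T_X)/(1-\psi_1)$ on $\Mbar_{0,1}(X,\beta)$ — and verifying that the reduced (rather than ordinary) virtual class behaves compatibly under this splitting, since the reduced perfect obstruction theory interacts nontrivially with the normalization of the reducible domain.
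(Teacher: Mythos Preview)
Your approach is essentially the paper's: split the virtual count into smooth-domain maps and maps with a contracted elliptic tail, showing the former gives $2n_{X,j}$ and the latter $C_X$. Two refinements are worth noting.

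First, the paper applies the divisor equation at the outset to pass from $\Mbar_{1,1}(X,\beta)$ to the \emph{unmarked} space $\Mbar_{E,0}(X,\beta)$ of stable maps from the fixed curve $E$; this removes the marking entirely and avoids tracking its location on a reducible domain. Your assertion that the marking sits on the contracted component is not forced --- both placements give valid stable maps in the fibre over $(E,0)$ --- and sorting out the two contributions directly is messier than the paper's route.

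Second, the role of the nodality hypothesis in your Step~3 is sharper than you indicate: it is precisely what guarantees that infinitesimal smoothings of the node of $\p^1\cup_x E$ are obstructed inside $\Mbar_{E,0}(X,\beta)$. In the deformation--obstruction sequence the composition
\[
T_x\p^1 \cong T_x\p^1\otimes T_xE \ra H^1(C,f^\ast T_X) \ra H^1(E,f|_E^\ast T_X)\cong T_{X,f(x)}
\]
is the differential at $x$ of the normalization map $\p^1\to X$ of the image rational curve, and this is injective exactly because that curve is at worst nodal (at a cusp the differential would vanish and the tail could be smoothed away). This is what makes the boundary locus $M_2$ scheme-theoretically isomorphic to $\Mbar_{0,1}(X,\beta)$, after which a straightforward comparison of obstruction sheaves produces the excess class $\ev_1^\ast c(T_X)/(1-\psi_1)$ with no extra factor of $\tfrac12$, resolving your Step~4 hesitation. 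Your closing concern about compatibility with the \emph{reduced} obstruction theory is legitimate; the paper handles it by observing that the reduction is performed compatibly on both $\Mbar_{E,0}(X,\beta)$ and $\Mbar_{0,1}(X,\beta)$ and does not interfere with the excess computation.
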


Here we say that a class $\beta \in H_2(X,\BZ)$ is a curve class if there exists an algebraic curve $C \subset X$ with $[C] = \beta$. The class is \emph{irreducible} if it can not be written as a sum of two non-trivial curve classes.

The factor $1/2$ on the right hand side of \eqref{formula1} arises since we do not identify conjugate maps in the Gromov-Witten integral. If $M_{1,0}(X,\beta)$ is one-dimensional but not necessarily generically reduced,
then the right side of \eqref{formula1} computes the length of the fiber of the forgetful map $M_{1,1}(X,\beta) \to M_{1,1}$ over the point $(E,0)$;
hence we count the elliptic curves with multiplicities.

If $X$ is a K3 surface or a fourfold of $K3^{[2]}$ type\footnote{
A hyper-K\"ahler is of $K3^{[n]}$-type if it is deformation equivalent to the Hilbert scheme of $n$ points on a K3 surface.}
then the Gromov--Witten theory of $X$ is known in all primitive curve classes \cite{MPT,HilbK3}.
For these cases we compute the right hand side of Proposition~\ref{MainProp}
in terms of modular forms in Section~\ref{Section_GW}.
By deformation invariance the Gromov--Witten invariants only depend on the 
Beauville-Bogomolov norm\footnote{The pairing
is induced from the Beauville-Bogomolov form on $H^2(X,\BZ)$ via Poincar\'e duality.
The pairing is $\BQ$-valued in general.} of the class $\beta$, denoted $(\beta,\beta)$.
The first expected values of $n_{X,j}$ are listed in Tables \ref{Table1} and \ref{Table2}.

\begin{table}[t]
{\renewcommand{\arraystretch}{1.5}\begin{tabular}{| c | c | c | c | c | c | c | c | c | c | c |}
\hline
\!$(\beta,\beta)$\! & $<0$ & $0$ & $2$ & $4$ & $6$ & $8$ & $10$ & $12$ \\
\hline
$n_X$ & $0$ & $24$ & $648$ & $9600$ & $102600$ & $881280$ & $6442320$ & $41513472$ \\
\hline
\end{tabular}}
\caption{First values of $n_{X,j}$ for a K3 surface $X$ and general $j$.}
\label{Table1}

{\renewcommand{\arraystretch}{1.5}\begin{tabular}{| c | c | c | c | c | c | c | c | c | c | c |}
\hline
\!$(\beta,\beta)$\! & $<0$ & $0$ & $\frac{3}{2}$ & $2$ & $\frac{7}{2}$ & $4$ & $\frac{11}{2}$ & $6$ \\
\hline
$n_X$ & $0$ & $648$ & $3780$ & $23760$ & $129600$ & $470880$ & $2396520$ & $6629040$ \\ 
\hline
\end{tabular}}
\caption{First values of $n_{X,j}$ in $K3^{[2]}$-type for general $j$, assuming $M_{1,0}(X,\beta)$ is generically reduced of dimension $1$.}
\label{Table2}
\end{table}

We discuss the first cases. The number $24$ in Table~\ref{Table1} is the number of nodal fibers in a general elliptic K3 surface.
The degree zero case in $K3^{[2]}$-type, $648$, is the 
virtual number of elliptic curves of fixed complex structure
in a general Lagrangian fibration $\pi : X \to \p^2$
(the count is virtual here since the moduli space is not of expected dimension; moreover, we work in the setting of complex hyperk\"ahler manifolds that are not necessarily projective).
This can be seen as follows. By a result of Markman the Lagrangian fibration $\pi$ is (a twist of)
the relative Jacobian fibration 
of a genus two K3 surface $(S,L)$.
Under $\pi$ the elliptic curves in $X$ of primitive class map to points on the discriminant\footnote{
If $f:E \to J(C)$ is a map to the Jacobian of a smooth curve, then the image intersects the theta divisor with multiplicity $\geq 2$.
Hence if $C \in |L|$ and so $J(C) \subset X$, then the class of $f(E)$ is not primitive in $X$.
%
},
and the $j$-invariant of such elliptic curve is precisely the $j$-invariant of the corresponding nodal elliptic curve in the genus 2 K3 surface determined by the basepoint.
This explains the equality
\[ n_{K3[2],(\beta,\beta)=0} \ = \ n_{K3,(\beta,\beta)=2}. \]
The number $23760$ 
concerns the double covers of EPW sextics.
The case $(\beta,\beta) = \frac{3}{2}$ corresponds to the Fano variety of lines of a cubic fourfold that we will consider below.

If $X$ is of $K3^{[n]}$-type explicit conjectural formulas for the right hand side of Proposition~\ref{MainProp} can be
obtained from conjectures made in~\cite{HilbK3}.



\subsection{The Fano variety of lines} \label{Subsection_intro_fano_var}
Let $Y \subset \BP^5$ be a nonsingular cubic fourfold and let
\[ F = \{ \, l \in \mathrm{Gr}(2,6) : l \subset Y \} \]
be the Fano variety of lines in~$Y$.
The Fano varieties $F$ form 
a $20$-dimensional family of polarized hyper-K\"ahler fourfolds
of $K3^{[2]}$-type \cite{BD}.
Let $H_{F}$ be the Pl\"ucker hyperplane section on $F$ and consider the primitive integral class
\begin{equation} \beta = \frac{1}{36} H_F^3. \label{3fsd} \end{equation}
The class is of degree $\langle \beta, H \rangle = \frac{1}{36} \deg_{H_F}(X) = 3$ with respect to $H_F$.
If $F$ is very general, then $\beta$ is the unique primitive curve class. 

Consider the projective embedding
\[ F \subset \mathrm{Gr}(2,6) \hookrightarrow \p^{14} \]
defined by $H_F$ and assume that $\beta$ is irreducible (this is a general condition).
If $C \subset F$ is a curve of class $\beta$ then it is of degree $3$ in $\p^{14}$.
It spans a three-plane in $\p^{14}$ if and only if it is a rational normal curve.
Otherwise the curve spans a $\p^2$ in which case it is a plane cubic and hence of arithmetic genus~$1$.
Moreover, since $\p^2 \cap \mathrm{Gr}(2,6)$ contains a cubic curve we see that in fact $\p^2 \subset \mathrm{Gr}(2,6)$.\footnote{
We thank C.~Voisin for pointing out this approach to elliptic curves in $F$.}
Every $\p^2 \subset \mathrm{Gr}(2,6)$ corresponds to lines passing through a fixed point $v \in \p^5$.
Hence the curves in class $\beta$ of arithmetic genus $1$ correspond to surfaces in $Y$ which are cones.

Let $\Chow_{\beta}(F)$ be the Chow variety of curves in $F$ in class $\beta$.
Define
\[ \Sigma \subset \Chow_{\beta}(F) \]
to be the (reduced) locus of curves of arithmetic genus $1$.

\begin{thm} \label{MainThm} Let $F$ be a general Fano variety of lines of a cubic fourfold.
Then $\Sigma$ is a non-singular curve with the following properties:
\begin{enumerate}
\item[a)] $\Sigma$ has at most two connected components.
\item[b)] $\Sigma$ admits a natural involution with connected quotient.
\item[c)] $g(\Sigma) = 631$.
\item[d)] If the curve $C \in \Sigma$ is singular, then $C \subset F$ is a rational nodal curve.
\item[e)] The map $\Sigma \to \p^1$ taking a point $[C] \in \Sigma$ to its $j$-invariant (in the coarse quotient of $\Mbar_{1,1}$) is of degree $3780$.
\end{enumerate}
\end{thm}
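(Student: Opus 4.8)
The plan is to realise $\Sigma$ as the zero locus of a section of an explicit bundle on a Grassmann bundle over $Y$, refining the discussion in Section~\ref{Subsection_intro_fano_var}. Fix a cubic form $f$ with $Y=\{f=0\}\subset\p^5$. For a smooth point $v\in Y$ put $\CW_v:=\ker(Df_v)/\langle v\rangle$ (a $4$-dimensional space) and let $Q_v\in\Sym^2\CW_v^{\vee}$ be the quadratic form induced by the Hessian of $f$ at $v$ (it descends to $\CW_v$ by the Euler relations); the lines of $Y$ through $v$ are the points of $\{Q_v=0\}\cap\{f=0\}\subset\BP(\CW_v)\cong\p^3$. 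As in Section~\ref{Subsection_intro_fano_var}, a point $[C]\in\Sigma$ whose lines pass through $v$ is the same datum as a $3$-dimensional $\hat\Pi\subset\CW_v$ with $Q_v|_{\hat\Pi}=0$, via the identification of $C$ with the plane cubic $\{f=0\}\subset\BP(\hat\Pi)\cong\p^2$ (the cubic is well defined there since the ambiguity in $f$ is a multiple of $Q_v$); conversely every such $\hat\Pi$ gives an integral plane cubic of class $\beta$ in $F$, using that for a general $F$ the fourfold $Y$ contains no $2$-plane and $F$ contains no curve of $H_F$-degree below $3$. Globalising, $\CW:=T_Y(-1)$ is a rank-$4$ bundle with $\det\CW=\CO_Y(-1)$ and $Q$ is a section of $\Sym^2\CW^{\vee}\otimes\CO_Y(1)$; on the Grassmann bundle $\pi\colon G:=\mathrm{Gr}(3,\CW)\to Y$ (a $\p^3$-bundle, $\dim G=7$) with tautological rank-$3$ subbundle $\hat\CU\subset\pi^*\CW$, the form $Q$ restricts to a section $\tau$ of the rank-$6$ bundle $\CD:=\Sym^2\hat\CU^{\vee}\otimes\pi^*\CO_Y(1)$, giving a natural identification
\[ \Sigma\ \cong\ Z(\tau)\ \subset\ G. \]

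Next I would establish the genericity package: for a general $F$ the section $\tau$ is transverse, so $\Sigma=Z(\tau)$ is a smooth curve; moreover the rank-$\le1$ locus of $Q$ on $Y$ is empty (it has expected codimension $\binom{4}{2}=6>\dim Y$), and each integral plane cubic as above is nodal whenever singular. Let $B\subset Y$ be the image of $\Sigma\to Y$, i.e.\ the locus where $Q_v$ has rank exactly $2$; the fibre of $\Sigma\to B$ over such $v$ is the pair of planes contained in the quadric $\{Q_v=0\}$, so $\Sigma\to B$ is an \'etale double cover. Hence $B$ is a smooth curve, $\Sigma$ is smooth and carries the free involution interchanging the two sheets with quotient $B$, and --- once $B$ is shown connected (via monodromy in the $20$-dimensional family of cubic fourfolds, or a connectedness theorem for degeneracy loci) --- $\Sigma$ has at most two connected components; this proves (a), (b), and with the no-cusps statement also (d). For (c), the \'etale double cover gives $\chi(\CO_{\Sigma})=2\chi(\CO_B)$, hence $g(\Sigma)=2g(B)-1$ for the arithmetic genus, so the claimed value is unaffected by whether $\Sigma$ is connected; concretely, since $\Sigma=Z(\tau)$ is a local complete intersection curve in $G$, adjunction gives $\omega_{\Sigma}=(\omega_G\otimes\det\CD)|_{\Sigma}$ and $[\Sigma]=c_6(\CD)$ in $G$, whence
\[ g(\Sigma)\ =\ 1+\tfrac12\int_{G}c_1(\omega_G\otimes\det\CD)\cdot c_6(\CD). \]
One evaluates this from $c(\CW)$ (using the Euler sequence on $\p^5$ and $0\to T_Y\to T_{\p^5}|_Y\to\CO_Y(3)\to0$) and the relative Schubert calculus of $G\to Y$; it should come out $631$, with $g(B)=316$.

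For (e), I would first note that on a general $F$ any stable map from a smooth genus-$1$ curve in class $\beta$ is birational onto its image (as $\beta$ is irreducible and $F$ has no curve of $H_F$-degree below $3$), so the image is an integral cubic of geometric genus $1$, i.e.\ a smooth plane cubic $C\in\Sigma$; thus $M_{1,0}(F,\beta)$ is identified with the dense open subset $\Sigma^{\mathrm{sm}}\subset\Sigma$ of smooth members, and in particular each of its components is generically reduced of dimension $1$. Parts (a)--(d) further show that every rational curve of class $\beta$ (a smooth rational normal cubic, or a singular plane cubic in $\Sigma$) is nodal, so the hypotheses of Proposition~\ref{MainProp} hold and $n_{F,j}=\tfrac12(N_F-C_F)$ for general $j$; by the evaluation of $N_F$ and $C_F$ in $K3^{[2]}$-type with $(\beta,\beta)=\tfrac32$ (Section~\ref{Section_GW} and Table~\ref{Table2}) this equals $3780$. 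Since $n_{F,j}$ is the number of points in the fibre of the $j$-invariant morphism $\Sigma\to\p^1$ over a general $j$, that morphism has degree $3780$.

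The main obstacle is the genericity statement of the second paragraph. The symmetric map $Q\colon\CW\to\CW^{\vee}\otimes\CO_Y(1)$ is highly non-generic --- it is the Hessian of $f$ --- so the transversality of $\tau$ (hence smoothness and the expected dimension of $\Sigma$), the vanishing of the rank-$\le1$ locus, the absence of cusps, and the connectedness of $B$ do not follow from a generic-section argument; one must either exhibit a single cubic fourfold with all these properties and invoke openness, or run a Bertini-type argument that genuinely uses the freedom to vary $f$ (e.g.\ via the $\mathrm{PGL}_6$-action together with Bertini in the linear system of cubics). By comparison the Chern-class integral in (c) and the deduction of (e) from Proposition~\ref{MainProp} are routine.
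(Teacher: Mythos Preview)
Your approach is sound and in several places more economical than the paper's, but it diverges from it in two substantive ways.

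\textbf{Parametrizing $\Sigma$.} You realise $\Sigma$ as the zero locus of a section of a rank-$6$ bundle on the $7$-dimensional bundle $\mathrm{Gr}(3,\CW)\to Y$. The paper instead works on a $20$-dimensional homogeneous space: it parametrizes triples $(P,v,Z)$ of a $3$-plane $P\subset\p^5$, a vertex $v\in P$, and a cubic cone $Z\subset P$, as a tower $\p(\CE)\to\p(\CK)\to\mathrm{Gr}(4,6)$, and cuts out $\Sigma$ as the vanishing of a rank-$19$ bundle section built from $f$. Both adjunction integrals are routine (the paper evaluates its with the Chow package). Your space is smaller and more directly tied to the geometry; the paper's has the advantage that the section is visibly linear in $f$, so regularity (and the nodality and no-cusp statements) follow from a clean incidence-variety dimension count over $\p^5\times\p^{55}$ rather than from a transversality argument for the Hessian. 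For connectedness of $B$ the paper does exactly what you propose: it identifies $B=\CM_Y$ with the rank-$\le 4$ degeneracy locus of the symmetric Jacobian form $J(f)\colon\CO_Y^6\otimes\CO_Y^6\to\CO_Y(1)$ and invokes Tu's connectedness theorem for symmetric degeneracy loci.

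\textbf{Part (e).} Here your route is genuinely different. The paper computes the degree of $\Sigma\to\p^1$ by intersection theory alone: it writes down the divisor $D\subset\p(\CE)$ of cones over singular plane cubics (class $12z-12c_1$ by a discriminant calculation), evaluates $D\cdot[\Sigma]=\int c_{19}(\CF)\cdot D=3780$, and observes that since no cusp occurs $D|_\Sigma$ is the pullback of a point from $\p^1$. This gives an \emph{independent} check of the Gromov--Witten prediction. Your argument instead invokes Proposition~\ref{MainProp} and the $K3^{[2]}$ evaluation. That is logically fine, but it requires the hypotheses of Proposition~\ref{MainProp}, in particular that $M_{1,0}(F,\beta)$ is generically reduced. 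You assert this via the identification $M_{1,0}(F,\beta)\cong\Sigma^{\mathrm{sm}}$, but as written this is only a bijection on points; the scheme structure on $M_{1,0}$ comes from deformation theory of maps, with tangent space $H^0(E,N_{E/F})$. The paper devotes an entire subsection to proving $h^0(E,N_{E/F})=1$ by an explicit analysis of the incidence correspondence on the blow-up of $Y$ at the vertex. In your setup there is a shortcut: since $F\cap\p(\hat\Pi)=E$ scheme-theoretically (the plane is isotropic for $Q_v$, so only the cubic equation remains), any first-order deformation of $E$ in $F$ fixing the plane is trivial, and taking the span gives a morphism $\mathrm{Hilb}\to Z(\tau)$ which is therefore injective on tangent spaces; combined with smoothness of $Z(\tau)$ and the inverse map from the universal family, this yields the scheme-theoretic isomorphism. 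You should make this step explicit rather than leave it implicit.
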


We do not know whether the curve $\Sigma$ in the theorem is connected, although we expect it. 
The genus of a (possibly disconnected) curve is defined to be $g(\Sigma) = 1 - \chi(\CO_{\Sigma})$.


Let $F$ be a general Fano variety of lines.
Let 
\[ S \subset \Chow_{\beta}(F) \]
be the locus of rational curves.
Then $S$ is a smooth connected surface isomorphic to the locus of lines in $Y$ of second type \cite{A, OSY}.
We then have
\[ \Chow_{\beta}(F) = S \cup \Sigma \]
and the intersection $S \cap \Sigma$ consists of finitely many (at most $3780$) points corresponding precisely to the nodal rational curves.

Finally we consider the moduli space of stable maps.

\begin{prop} \label{Prop_Non_sing}
Let $F$ be a general Fano variety of lines of a cubic fourfold.
Then $M_{1,0}(F,\beta)$ is a non-singular curve and isomorphic to the open subset of $\Sigma$ parametrizing smooth elliptic curves.
Moreover, every curve in $F$ of class $\beta$ is nodal.
\end{prop}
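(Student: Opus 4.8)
The plan is to bootstrap from Theorem~\ref{MainThm}, which already analyzes the Chow variety $\Chow_\beta(F) = S \cup \Sigma$, and to transfer that picture to the moduli space $M_{1,0}(F,\beta)$ of stable maps from \emph{smooth} genus one curves. First I would set up the natural morphism $M_{1,0}(F,\beta) \to \Chow_\beta(F)$ sending a stable map $f: C \to F$ to its image cycle $f_*[C]$, and analyze its image. A stable map from a smooth genus one curve in class $\beta$ is birational onto its image (since $\beta$ is primitive, it cannot factor through a multiple cover), so the image is an irreducible curve of arithmetic genus $\leq 1$ and degree $3$ in $\p^{14}$; by the classification recalled before Theorem~\ref{MainThm}, such an image is either a rational normal cubic or a plane cubic. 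The rational normal cubic case is excluded because the source has genus one, and a smooth elliptic curve cannot map birationally onto a rational curve; hence the image lies in the genus-$1$ locus. If the plane cubic is smooth, then $f$ is an isomorphism onto it; if it is nodal or cuspidal, $f$ is the normalization map. By Theorem~\ref{MainThm}(d) the only singular curves in $\Sigma$ are nodal rational curves, so in fact the image of a stable map from a \emph{smooth} genus one curve is necessarily a smooth plane cubic, and $f$ is an isomorphism onto it. This gives a bijective morphism from $M_{1,0}(F,\beta)$ onto the open locus $\Sigma^\circ \subset \Sigma$ of smooth elliptic curves.

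Next I would upgrade this bijection to an isomorphism. Since $\Sigma$ is a non-singular curve by Theorem~\ref{MainThm} and $\Sigma^\circ$ is open in it, $\Sigma^\circ$ is a smooth curve; a bijective morphism from a reduced scheme onto a smooth curve is an isomorphism provided the source is also a smooth curve, so it suffices to prove that $M_{1,0}(F,\beta)$ is itself non-singular of dimension one. For this I would compute the deformation theory of a stable map $f: C \xrightarrow{\sim} C' \subset F$, where $C'$ is a smooth plane cubic contained in a $\p^2 \subset \mathrm{Gr}(2,6)$. The obstruction space is $H^1(C, f^*T_F)$, and the tangent space is $H^0(C, f^*T_F)$; I would show $H^1(C, f^*T_F) = 0$, which simultaneously gives smoothness and the expected dimension $h^0(C,f^*T_F) = \deg_C(f^*T_F) = \langle \beta, c_1(T_F)\rangle = 0$ corrected by $h^0$ of the trivial part. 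Concretely, $T_F$ restricted to $C'$ fits into exact sequences coming from $C' \subset \p^2 \subset F$; using that $\p^2 \subset \mathrm{Gr}(2,6)$ is a linear space and the normal bundle of $\p^2$ in $\mathrm{Gr}(2,6)$ (resp.\ in $F$) is explicitly known, one reduces the vanishing of $H^1$ to a vanishing statement for line bundles on a plane cubic, where it follows from positivity of the relevant degrees. This computation is essentially the genus-one analog of the genus-zero transversality result \cite[Prop.~2.1]{OSY} / \cite{A}.

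Finally, the last sentence, that every curve in $F$ of class $\beta$ is nodal, is immediate from Theorem~\ref{MainThm}: the rational curves form the smooth surface $S$ (lines of second type), whose members are either smooth rational normal cubics or — at the finitely many points of $S \cap \Sigma$ — nodal plane cubics, and Theorem~\ref{MainThm}(d) says the singular members of $\Sigma$ are nodal; there are no other curves in class $\beta$. I expect the main obstacle to be the cohomology vanishing $H^1(C, f^*T_F) = 0$: one must control $T_F$ along a plane cubic sitting inside a linearly embedded $\p^2$, and the subtlety is that $F$ is cut out inside $\mathrm{Gr}(2,6)$ by a section of $\mathrm{Sym}^3\CU^\vee$, so the normal bundle sequence $0 \to T_F \to T_{\mathrm{Gr}}|_F \to \mathrm{Sym}^3\CU^\vee|_F \to 0$ must be combined carefully with the geometry of cones over the cubic fourfold $Y$; degenerate configurations (e.g.\ the plane $\p^2$ meeting $Y$ non-transversally, or the cone vertex lying on $Y$) would have to be excluded using the genericity of $F$.
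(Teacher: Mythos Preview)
Your framing---identifying $M_{1,0}(F,\beta)$ set-theoretically with the smooth locus $\Sigma^{\circ}\subset\Sigma$ and reading off nodality from Theorem~\ref{MainThm}(d)---matches the paper's. The gap is the smoothness step.

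The claim $H^1(E,f^{\ast}T_F)=0$ is false, and the ``positivity of the relevant degrees'' heuristic cannot work here. Since $F$ is hyper-K\"ahler, $c_1(T_F)=0$, so $\deg(T_F|_E)=0$; Riemann--Roch on an elliptic curve then gives $h^0(T_F|_E)=h^1(T_F|_E)$, and the inclusion $T_E\hookrightarrow T_F|_E$ forces $h^0\geq 1$, hence $h^1\geq 1$. In fact $h^1(N_{E/F})=1$ as well---this is precisely the class absorbed by the reduced obstruction theory. (Relatedly, your identification of the tangent and obstruction spaces of $M_{1,0}$ as $H^i(E,f^{\ast}T_F)$ is the deformation theory of $\mathrm{Mor}(E,F)$ with \emph{fixed} source; for $M_{1,0}$ at an embedding the correct tangent space is $H^0(N_{E/F})$.) The right target is therefore $h^0(E,N_{E/F})=1$, and since $\deg N_{E/F}=0$ there is no positivity to exploit; your proposed reduction to line bundles on a plane cubic via the sequence $0\to T_F\to T_{\mathrm{Gr}}|_F\to\Sym^3\CU^{\vee}|_F\to 0$ will not produce a vanishing.

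The paper's argument (Proposition~\ref{Prop_normalbundle}) is of a different nature. It works on the cubic-fourfold side: blowing up $Y$ at the cone vertex $v$ and comparing first-order deformations of lines in $Y$ with those in the blow-up produces an exact sequence
\[ 0 \to N_{E/F} \to T_D(-1)|_E \to \bigoplus_{\ell\in E\cap E'} \BC_{\ell} \to 0, \]
where $D=\p(T_{Y,v})\cong\p^3$ and $E'$ is the partner plane cubic. One checks $h^0(T_D(-1)|_E)=4$ from the Euler sequence, so the content becomes surjectivity of the evaluation map $\BC^4\to\BC^3$. This is established not by a cohomological vanishing but by writing the map as an explicit $3\times 4$ matrix in coordinates on $Y$ and showing that its rank-drop locus has codimension~$2$ in the space of cubic fourfolds---a genuine genericity argument rather than a bundle-theoretic one.
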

%

By using Theorem~\ref{MainThm} directly, or combining Propositions \ref{MainProp} and~\ref{Prop_Non_sing},
we have thus obtained two proofs of the following:

\begin{cor}
A general Fano variety of lines contains precisely $3780$ elliptic curves of minimal degree and of fixed general $j$-invariant. 
\end{cor}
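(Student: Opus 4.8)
The plan is to deduce the count in two ways, matching the two routes indicated above.

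For the first I would argue directly from Theorem~\ref{MainThm}. The $j$-invariant gives a morphism $\jmath\colon \Sigma \to \p^1$ from the non-singular projective curve $\Sigma$ to the coarse quotient of $\Mbar_{1,1}$, and by part (e) it has degree $3780$. I would then fix $j_0 \in \p^1$ lying outside the finite set consisting of the branch locus of $\jmath$ together with the boundary point $\infty$ of $\Mbar_{1,1}$. Over such a $j_0$ the fibre $\jmath^{-1}(j_0)$ consists of $3780$ distinct reduced points; since by part (d) every singular member of $\Sigma$ is a nodal rational curve, whose stabilization is the boundary point, and $j_0 \neq \infty$, each of these $3780$ points corresponds to a \emph{smooth} curve $C \subset F$ of arithmetic --- hence geometric --- genus $1$, i.e.\ an honest elliptic curve of class $\beta$ with $j(C) = j_0$. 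These curves are pairwise distinct subvarieties of $F$ because $\Sigma$ sits inside the Chow variety $\Chow_\beta(F)$. Conversely, using the dichotomy recalled before Theorem~\ref{MainThm} --- a curve of class $\beta$ has arithmetic genus $0$ or $1$ --- any elliptic curve in $F$ of class $\beta$ whose normalization has $j$-invariant $j_0$ must be a smooth plane cubic, hence defines a point of $\jmath^{-1}(j_0)$. Therefore $n_{F,j_0} = \# \jmath^{-1}(j_0) = 3780$.

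For the second I would instead invoke Proposition~\ref{Prop_Non_sing}: since $M_{1,0}(F,\beta)$ is a non-singular curve and every curve of class $\beta$ in $F$ is nodal, the hypotheses of Proposition~\ref{MainProp} are satisfied, so $n_{F,j} = \frac{1}{2}(N_F - C_F)$ for general $j$. It then remains to evaluate $N_F - C_F$. Because $F$ is of $K3^{[2]}$-type with $(\beta,\beta) = \frac{3}{2}$ and these invariants depend only on the Beauville--Bogomolov norm, I would compute them from the fully known Gromov--Witten theory of $K3^{[2]}$-type varieties \cite{MPT, HilbK3}; this evaluation is carried out in Section~\ref{Section_GW} and yields $\frac{1}{2}(N_F - C_F) = 3780$, as recorded in Table~\ref{Table2}.

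All the substance sits in the inputs: Theorem~\ref{MainThm}, in particular the degree computation (e) --- which is essentially an Euler-characteristic/Chern-class computation on the smooth curve $\Sigma$ --- for the first argument, and the explicit modular evaluation of $N_F - C_F$ in Section~\ref{Section_GW} for the second. Granting these, the corollary is bookkeeping; the one point that genuinely uses Theorem~\ref{MainThm}(d) is that all singular members of $\Sigma$ lie over $j = \infty$, so that a general $j$ sees only smooth elliptic curves, while the absence of overcounting is automatic from $\Sigma \subset \Chow_\beta(F)$. I therefore do not expect any real obstacle in the corollary itself beyond the results already assumed.
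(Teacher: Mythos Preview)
Your proposal is correct and follows exactly the two routes the paper indicates: deducing the count from Theorem~\ref{MainThm}(e) together with (d), and alternatively from Propositions~\ref{MainProp} and~\ref{Prop_Non_sing} plus the $K3^{[2]}$ Gromov--Witten evaluation. The details you supply (avoiding the branch locus and $j=\infty$, using the arithmetic-genus dichotomy to see that any genus-$1$ curve in class $\beta$ is a smooth plane cubic and hence lies in $\Sigma$) are the natural ones and match what the paper leaves implicit.
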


\subsection{Plan of the paper}
In Section~\ref{Section_GW} we prove Proposition~\ref{MainProp}
and recall the Gromov--Witten calculations for the K3 and $K3^{[2]}$ case.
In Section~\ref{Section_Fano} we discuss the geometry of the Fano and prove Theorem~\ref{MainThm} and Proposition~\ref{Prop_Non_sing}.
In Section~\ref{Kummer} we give an example where $M_{1,0}(X,\beta)$ is empty.

\subsection{Conventions}
A statement holds for a \emph{general} (resp. \emph{very general}) polarized projective variety $(X,H)$ if it holds away from a Zariski closed subset (resp. a countable union of proper Zariski-closed subsets)
in the corresponding component of the moduli space.

\subsection{Acknowledgements}
We would like to thank C.~Voisin for suggesting the approach to calculate the genus of the curve $\Sigma$,
and H.-Y. Lin who pointed out the generalized Kummer example.
We further thank L.~Battistella, D.~Huybrechts, R.~Mboro, J.~Schmitt, J.~Shen, and Q.~Yin for useful discussions and their interest.
We are also very grateful to the anonymous referees for helpful comments.

\section{Gromov--Witten theory}
\subsection{Proof of Proposition~\ref{MainProp}} \label{Section_GW}

Let $(E,0)$ be a general non-singular elliptic curve and let
$\Mbar_{E,n}(X,\beta)$ be the moduli space of $n$-pointed
stable maps from $E$ to $X$ in class $\beta$.
The points of $\Mbar_{E,n}(X,\beta)$ correspond
to stable maps from nodal, $n$-pointed degnerations of $E$ to $X$,
see \cite{Pand} for a definition\footnote{In our case we do not identify maps that differ by a conjugation on $E$}.
The moduli space $\Mbar_{E,n}(X,\beta)$ carries a perfect obstruction theory
and its reduced virtual class \cite{MPT, HilbK3, K3xE} satisfies
\[ [ \Mbar_{E,n}(X,\beta) ]^{\text{vir}} = p^{\ast}[(E,0)] \cap [ \Mbar_{1,n}(X,\beta) ]^{\text{vir}}. \]
Using this and the divisor equation \cite{FP} we obtain
\[ N_X 
=
 \int_{[ \Mbar_{E,1}(X,\beta) ]^{\text{vir}}} \ev_1^{\ast}(D)
=
\int_{[ \Mbar_{E,0}(X,\beta) ]^{\text{vir}}} 1.
\]

Since $\beta$ is irreducible the moduli space $\Mbar_{E,0}(X,\beta)$ parametrizes the following two types of stable maps $f : C \to X$:
\begin{enumerate}
\item $C$ is non-singular, isomorphic to $E$, and $f : C \to X$ is birational onto an elliptic curve in $X$, or
\item $C = \p^1 \cup_x E$ and the degree of $f|_{E}$ is zero.
\end{enumerate}

Since by assumption every irreducible component of $M_{1,0}(X,\beta)$ is generically reduced of dimension $1$,
there exists a dense open subset 
\[ U \subset M_{1,0}(X,\beta) \]
which is smooth.
Consider the map $\psi : U \to \p^1$ which sends a stable map to the $j$-invariant of its source.\footnote{Here
$\p^1$ is the coarse moduli space of the Deligne-Mumford stack $\Mbar_{1,1}$.
After an \'etale cover the universal curve over $U$ admits a section and defines a map to $\Mbar_{1,1}$, and hence by composition to $\p^1$.
Since $\p^1$ is a scheme these morphisms glue to an actual map $U \to \p^1$.}
By generic smoothness of $\psi$, the curve $E$ is general, since the complement of $U$ in $M_{1,0}(X,\beta)$ consists of finitely many points,
it follows that the locus $M_1$ in $\Mbar_{E,0}(X,\beta)$ parametrizing maps of the first kind consists of finitely many reduced points.
By definition $M_1$ is a closed subset of $\Mbar_{E,0}(X,\beta)$.
On the other hand, since the locus of maps from reducible curves is a closed subset of $\Mbar_{1,0}(X,\beta)$,
we find that $M_1$ is also open in $\Mbar_{E,0}(X,\beta)$. Hence we have the disjoint union:
\[ \Mbar_{E,0}(X,\beta) = M_1 \sqcup M_2 \]
where the closed points of $M_2$ parametrize maps of second type.
Splitting the contribution from the virtual class hence yields
\[ N_X = \int_{[M_1]^{\text{vir}}} 1 + \int_{[M_2]^{\text{vir}}} 1. \]

We now determine the contributions from each component.
We consider $M_1$ first which consists of finitely many reduced points.
Since the dimension of the virtual fundamental class equals the dimension of the moduli space (zero), it coincides with the usual fundamental class and thus its integral is the number of these reduced points.
Since conjugate maps yield the same elliptic curve in $X$ we conclude
\[ n_{X, j(E)} = \frac{1}{2} |M_1| = \frac{1}{2} \int_{[M_1]^{\text{vir}}} 1. \]

For the contribution from the second component consider the morphism
\[ \mathfrak{s} : M_{0,1}(X,\beta) \to M_2 \]
that sends a stable map $(g : \p^1 \to X, x)$ to the element $(f : \p^1 \cup_x E \to X)$ with $f|_{\p^1} = g$ and $f|_{E}$ constant.
By construction, $\mathfrak{s}$ is an isomorphism on closed points.
We claim that $\mathfrak{s}$ is also scheme-theoretically an isomorphism.
For this it is enough to show that all infinitesimal smoothings of the domain $C = \p^1 \cup_x E$ of $f$ are obstructed in $M_2$.
Recall the deformation obstruction sequence for $p = [f:C \to X]$ in $\Mbar_{1,0}(X,\beta)$:
\[ H^0(C, f^{\ast} T_X) \to T_{\Mbar_{1,0}(X,\beta), p} \to H^1(C, T_C) \to H^1(C, f^{\ast} T_X) \to \mathrm{Obs}_{p} \to 0 \]
where $\mathrm{Obs}_p$ is the fiber of the obstruction sheaf at the point $p$. We refer to \cite[Part 4]{MirSym} for more background.
By the normalization sequence $0 \to \CO_C \to \CO_{\p^1} \oplus \CO_{E} \to \BC_{x} \to 0$
for the curve $C$ we have the decomposition
\[ H^1(C, T_C) = H^1(E, T_E) \oplus (T_{x} \p^1 \otimes T_{x} E). \]
The first summand corresponds to deformation of the complex structure of the elliptic curve, the second correspond to smoothing of the nodes.
Since in $\Mbar_{E,0}(X,\beta)$ the former are obstructed, we get the sequence
\[ H^0(C, f^{\ast} T_X) \to T_{M_1, p} \to T_{x} \p^1 \otimes T_{x} E \xrightarrow{\rho} H^1(C, f^{\ast} T_X) \to \mathrm{Obs}_{p} \to 0. \]
To prove that any smoothings of $C$ are obstructed we need to show that $\rho$ is injective.
The composition
\begin{multline*} T_x \p^1 \cong T_{x} \p^1 \otimes T_{x} E \overset{\rho}{\to} H^1(C, f^{\ast}T_X) \\
\overset{\sigma}{\to} H^1(\p^1, f|_{\p^1}^{\ast}T_X) \oplus H^1(E, f|_{E}^{\ast}T_X) \to H^1(E, f|_{E}^{\ast} T_X) \cong T_{X, f(x)} \end{multline*}
is precisely the differential of $f|_{\p^1}$ at $x$, where $\sigma$ is induced by the normalization sequence.
But by our second assumption in Proposition~\ref{MainProp} every rational curve in $X$ is at most nodal. Hence $f|_{\p^1}$, which is the normalization of its image, has everywhere non-zero differential. This proves the claim.\footnote{We refer also to \cite[Section 2]{Pand} for a similar argument: We can smooth out elliptic tails only if the image has non-nodal singularieties, for example a cusp.}

To determine the contribution from $M_2$ to our integral, it remains to relate the reduced virtual classes on both sides of the isomorphism
\[ \mathfrak{s} : M_{0,1}(X,\beta) \overset{\cong}{\to} M_2. \]
The obstruction sheaf $\mathrm{Obs}'$ of $M_{0,1}(X,\beta)$ has fiber $H^1(\p^1, f^{\ast} T_X)$ at $p$. Hence the
obstruction sheaves $\mathrm{Obs}$ and $\mathrm{Obs}'$ differ
 by the cokernel of the inclusion
\[ T_{x} \p^1 \otimes T_{x} E \to H^1(C, f^{\ast} T_X) \to H^1(E, f|_{E}^{\ast} T_X) = H^1(E,\CO_E) \otimes \ev_x^{\ast} T_X. \]
By the excess intersection formula this gives\footnote{In the above discussion we have worked with the
standard (non-reduced) perfect obstruction theory. The reduction process on both $\Mbar_{E,0}(X,\beta)$ and $\Mbar_{0,1}(X,\beta)$
is compatible and does not affect the excess intersection calculations.}
\begin{align*} [ M_2 ]^{\text{vir}} 
& = \left[ \frac{ \ev_1^{\ast} c(T_X) }{1-\psi_1} \right]_{2d-1} \cap [ \Mbar_{0,1}(X,\beta) ]^{\text{vir}} 
\end{align*} 
where $\psi_1$ is the first Chern class of the cotangent line bundle at the first marking, and we hae used that $H^1(E,\CO_E)$ and $T_{x}E$ are constant over the moduli space. 
This yields
\begin{align*}
\int_{[ M_2 ]^{\text{vir}}} 1
& = \int_{[ \Mbar_{0,1}(X,\beta) ]^{\text{vir}}} \left[ \frac{\ev_1^{\ast} c( T_X )}{1 - \psi_1}  \right]_{2d-1} 
= C_X. 
\end{align*}
Putting everything together we find
$N_X = 2 n_{X, j_{\text{gen}}} + C_X$. \qed

\subsection{Calculations: K3 surfaces} \label{Subsection_Calculations_K3}
Let $X$ be a very general K3 surface with primitive curve class $\beta \in H_2(X,\BZ)$ of self-intersection
\[ \langle \beta, \beta \rangle = 2h-2, \quad h \in \BZ_{\geq 0}. \]
We claim that the moduli space $M_{1,0}(X,\beta)$ is a smooth curve.
Indeed, let $p = [f : E \to X]$ be a point on the moduli space and consider the sequence $0 \to T_E \to f^{\ast} T_X \to \CL \to 0$
for some degree $0$ line bundle $\CL$ on the elliptic curve $E$.
It follows that $T_{M_{1,0}(X,\beta),p} = H^0(E, \CL)$ is of dimension $\leq 1$.
Using that the dimension has to be at least the expected dimension (which is $1$), we see that the tangent space is exactly $1$-dimensional which gives the claim.
%
Moreover, by a result of Chen, every rational curve in class $\beta$ is nodal \cite{ChenK3}.
Hence the assumption of Proposition~\ref{MainProp} are satisfied and we can determine the number of elliptic curves via Gromov--Witten theory.

For that recall that by deformation invariance the invariants $N_X$ and $C_X$ only depend on the number $h$.
By Proposition~\ref{MainProp} the same holds for $n_{X,j}$ for general $j$. We write
\[ N_X = N_{K3,h}, \quad C_X = C_{K3,h}, \quad n_{X,j} = n_{K3,h}. \]
By the Yau-Zaslow formula (proven in \cite{Bea, BL}) and a basic Gromov--Witten calculation we have
\begin{align*} 
\sum_{h \geq 0} N_{K3,h} q^{h-1} & = 2 q \frac{d}{dq} \frac{1}{\Delta(q)} \\
\sum_{h \geq 0} C_{K3,h} q^{h-1} & = -2 \frac{1}{\Delta(q)}
\end{align*}
where
$\Delta(q) = q \prod_{m \geq 1} (1-q^m)^{24}$
is the discriminant modular form.
Hence
\[ \sum_{h = 0}^{\infty} n_{K3,h} q^{h-1} = \frac{1}{\Delta(q)} + q \frac{d}{dq} \frac{1}{\Delta(q)}. \]

\subsection{Calculations: $K3^{[2]}$ type}
Let $X$ be a hyper-K\"ahler variety of $K3^{[2]}$ type.
Below we will compute the right hand side of Proposition~\ref{MainProp}.

By deformation invariance (see \cite{OSY}) the Gromov--Witten invariants in a primitive class $\beta$
only depend on its Beauville-Bogomolov-square
\[ (\beta, \beta) = \frac{s}{2}, \quad s \in \BZ,\, s \equiv 0,3 (\text{mod } 4).\footnote{If $X$ is of $K3^{[2]}$-type, then
as a lattice $H_2(X,\BZ) \cong H_2(S,\BZ) \oplus \BZ a$ where $S$ is a K3 surface and $(a,a) = -1/2$. Hence the cases $s=1,2$ mod $4$ does not occur.} \]
We write $N_X = N_{K3[2],s}$ et cetera.
To evaluate the Gromov--Witten invariants we specialize
to the Hilbert scheme of 2 points of an elliptic K3 surface.
The value of $N_{K3[2],s}$ is then given precisely by \cite[Prop.1]{HilbK3}.
Hence it remains to evaluate $C_X = C_{K3[2],s}$. By definition and since odd Chern classes vanish on a hyperk\"ahler variety we have
\[ C_{X} = \langle \tau_1( c_2(X) ) \rangle^X_{0,\beta} + \langle \tau_3(1) \rangle^X_{0,\beta} \]
where we used the standard bracket notation for Gromov--Witten invariants:
\[ \langle \tau_{k_1}(\gamma_1) \ldots \tau_{k_n}(\gamma_n) \rangle^X_{g,\beta} = \int_{ [ \Mbar_{g,n}(X,\beta) ]^{\text{vir}} } \prod_i \ev_i^{\ast}(\gamma_i) \psi_i^{k_i}. \]
Since we are in genus $0$ we can apply topological recursion relations \cite{FP} to remove $\psi$-classes from the integral.
For example,  for any $\alpha \in H^4(X)$ and $D \in H^2(X)$ we have
\[ (D \cdot \beta)^2 \blangle \tau_1(\alpha) \rangle_{0,\beta} = \blangle \alpha, D^2 \rangle_{0,\beta} - 2 (D \cdot \beta) \blangle \tau_0(\alpha \cdot D) \rangle_{0,\beta}. \]
Choosing $D$ such that $D \cdot \beta = 1$ yields in our case
\[ \blangle \tau_1(c_2) \rangle_{0,\beta} = \blangle c_2, D^2 \rangle_{0,\beta} - 2 \blangle \tau_0( c_2 \cdot D ) \brangle_{0,\beta}. \]
The right hand side involves one and two-point invariants with no $\psi$-classes.
These can be evaluated directly from \cite[Thm.10]{HilbK3}, see also \cite[App.~A]{OSY} for a different presentation.
The case $\langle \tau_3(1) \rangle_{0,\beta}$ is similar.

Putting things together we arrive at the evaluation
\begin{multline} \label{ddcdc} 
\sum_{n,k} n_{K3[2],\frac{1}{2} (4n-r^2)} q^n y^k
=
\frac{\Theta^2}{\Delta} \left[ 54 \wp E_2 - \frac{9}{4} E_2^{2} + \frac{3}{4} E_4 - 54 \wp + \frac{15}{2} E_2 - 6 \right] \\
= 
648 + (648 y^2 + 3780 y + 23760 + 3780 y^{-1} + 648 y^{-2}) q + O(q^2).
\end{multline}
where $\Theta$, $\wp$, $E_k$ are the Jacobi theta function, Weierstra{\ss} elliptic function and the Eisenstein series
taken in the convention of \cite[App.B]{HilbK3}.

Finally, we consider the generating series
\[ n_{K3[2]}(q) = \sum_{s} n_{K3[2],s} q^s. \]
Applying the correspondence between
Jacobi forms of index $1$ and modular forms as explained in \cite[Thm.5.4]{EZ},
equality \eqref{ddcdc} becomes
%
\begin{multline*}
n_{K3[2]}(-q) \\
=
\frac{1}{F(q) \vartheta(q) \Delta(q^4)}
\left[
-\frac{9}{4} (\vartheta^4 + 4F)(G_2 - 1) - \frac{3}{8} G_4 + \frac{9}{8} G_2^2 - \frac{15}{4} G_2 + 3
\right],
\end{multline*}
where we have suppressed the argument of $q$ in the bracket on the right, and
\[ \vartheta(q) = \sum_{n \in \BZ} q^{n^2}, \quad 
 F(q) = \sum_{\substack{\text{odd }n > 0 \\ d|n}} d q^n, \quad
G_k(q) = 1 - \frac{2k}{B_k} \sum_{\substack{n \geq 1 \\ d|n}} d^{k-1} q^{4n}
\]
are (quasi-)modular forms for $\Gamma_0(4)$
with $B_k$ the Bernoulli numbers.

\section{The Fano varieties of lines} \label{Section_Fano}
\subsection{Overview} \label{Section_Fano_Overview}
Let $F$ be the Fano variety of lines on a smooth cubic fourfold $Y$.
Consider the incidence correspondence
\[\begin{tikzcd}
\mathcal{L} \arrow{r}{q_{Y}} \arrow[swap]{d}{q_{F}} & Y  \\
F 
\end{tikzcd}
\]
Every curve $C \subset F$ yields via the correspondence a surface in $Y$,
\[ S_{C}:=q_{Y}(q_{F}^{-1}(C)). \]

If the curve $C$ is of class $\beta$, the surface $S_C$ is of degree
$\langle \beta, H_F \rangle = 3$
and thus has cohomology class $H^2_Y \in H^{4}(Y,\mathbb{Z})$,
where $H_Y$ is the hyperplane class on $Y$, see \cite{BD}.
In particular, if $\beta$ is reducible, then the surface $S_C$ is reducible, which in turn implies that $Y$ contains a $\p^2$,
so by \cite{H} we get that $Y$ lies in a divisor in the moduli space of cubic fourfolds.
Since we are interested here in the general case, we may assume that $Y$ does not contain a plane and hence that $\beta$ is irreducible.

Following a discussion of Amerik \cite{A} we give a second description of the elliptic curves in class $\beta$.
For every point $y \in Y$ let
$Z_y \subset Y$ 
be the cone spanned by the lines in $Y$ incident to $y$.
The cone $Z_y$ is the intersection 
of the cubic fourfold $Y=V(f)$,
the tangent plane $T_{y}Y=V(\sum X_{i}\partial_{i}f(y))$
and the polar quadric $R_{y}Y=V(\sum y_{i} \partial_{i}f)$,
\begin{equation} Z_{y}=T_{y}Y\cap R_{y}Y \cap Y. \label{Zy} \end{equation}
The base $B_y$ of the cone is obtained from $Z_y$ by intersecting with a hyperplane $\mathbb{P}^{4}\subset \BP^{5}$ which does not contain $y$.
Since the base is in one-to-one correspondence with the lines through $y$ we have $B_y \subset F$.

If the hypersurfaces in \eqref{Zy} intersect properly then the base is a $(2,3)$-complete intersection curve in
\[ \mathbb{P}^{3}=T_{y}Y\cap \mathbb{P}^{4}. \]
By a result of Amerik \cite{A} this curve is of class $2 \beta$ in $F$.
However, assume that the quadric $T_{y}Y\cap R_{y}Y$ degenerates to a union of two distinct planes.
Since $Y$ does not contain a plane, the base splits as a union
\begin{equation} B_y = E_1 \cup E_2 \label{E12} \end{equation} 
of two distinct planar cubics $E_i$ meeting each other in three points.
Each $E_i$ is an arithmetic genus $1$ curve of class $\beta$ in $F$.

We consider other possible degenerate intersections of \eqref{Zy}.
Since $Y$ does not contain a plane it also does not contain a quadric surface, 
so it always intersects the quadric $R_y Y \cap T_y Y$ properly.
Hence the only other degenerate intersection is when $R_y Y \cap T_y Y$ degenerates further, either to a double plane or to a $3$-plane.
In Lemma~\ref{Lemma32} below we prove that both cases are ruled out for the general Fano.

By the discussion in Section~\ref{Subsection_intro_fano_var} every curve in $F$ of class $\beta$ and arithmetic genus $1$
arises from a cone in $Y$. Hence from the above we conclude that they must be one of the $E_i$ in \eqref{E12}.\footnote{
For an alternative proof one can also use the curve-surface correspondence and classification of cubic surfaces as in \cite[App.B]{OSY}.}

Consider the locus in $Y$ where the cone is degenerate
\begin{equation} \CM_{Y} = \{ y \in Y \, | \, T_{y}Y\cap R_{y}Y \text{ is not integral or of dimension}\geq 3 \}, \label{M_Y_def} \end{equation}
endowed with the reduced subscheme structure.
Recall also the reduced locus $\Sigma \subset \Chow_{\beta}(F)$ of curves of arithmetic genus $1$. 
By the above we have a morphism
\[ \Sigma \to \CM_Y \]
sending an elliptic curve in $F$ to the vertex of the corresponding cone in $Y$.
In Section~\ref{subsection_MY} we study the locus $\CM_Y$ and prove it is non-empty and connected.
In Section~\ref{Subsection_Parametrizing_Cubic_cones} we then express $\Sigma$ directly as the section of a vector bundle in a homogeneous space.
Both descriptions together will yield the description of $\Sigma$ as in Theorem~\ref{MainThm}.
In Section~\ref{Subsection_moduli_of_stable_maps} we consider the deformation theory of the moduli space of genus $1$ stable maps to $F$.

\subsection{} 
\label{subsection_MY}

We first prove connectivity and non-emptiness of $\CM_Y$.
\begin{lemma} \label{Lemma31} Let $Y$ be a smooth cubic fourfold. Then $\CM_{Y}$ is connected and non-empty.
\end{lemma}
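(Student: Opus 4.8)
The plan is to realize $\CM_Y$ as the degeneracy locus of a map of vector bundles on $Y$ and then apply a connectedness theorem for such loci. Recall from \eqref{Zy} that for $y \in Y$ the relevant quadric is $Q_y := T_y Y \cap R_y Y$, a quadric inside the $3$-plane $\p(T_y Y)$; by the discussion after \eqref{Zy} the only bad cases are when $Q_y$ degenerates further, i.e. when $Q_y$ is a double plane or a $3$-plane, equivalently when the rank of the associated symmetric form on the $4$-dimensional space $T_y Y$ drops to $\le 1$. First I would set up, over $Y$, the rank-$4$ bundle $\CT$ whose fiber at $y$ is (the linear span of) $T_y Y$, together with the quadratic form $Q_y$ on it, given by the Hessian-type construction $\sum y_i \partial_i f$ restricted to $T_y Y$; this is a section of $\Sym^2 \CT^\vee$ twisted by a line bundle, depending algebraically on $y$. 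Then $\CM_Y$ is exactly the locus where this symmetric form has rank $\le 1$, i.e. the $(r \le 1)$-degeneracy locus of a symmetric map $\CT \to \CT^\vee \otimes L$ on the four-dimensional variety $Y$.

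Next I would invoke the expected-codimension count and a connectedness theorem. For symmetric degeneracy loci, the locus where a symmetric form on a rank-$n$ bundle has corank $\ge r$ has expected codimension $\binom{r+1}{2}$; here $n = 4$ and corank $\ge 3$ (rank $\le 1$), so the expected codimension is $\binom{3+1}{2} = 6$. That exceeds $\dim Y = 4$, so naively one would expect $\CM_Y$ to be empty — which contradicts the desired non-emptiness. This tells me the setup must be refined: the form $Q_y$ is not a generic symmetric form but one that automatically has $y$ in its kernel (the line directions through $y$ lie on the cone), so $Q_y$ descends to a form on the rank-$3$ bundle $\CT/\CO_Y(-1)$ or similar, i.e. the quadric $Q_y$ really lives on the $\p^2 = \p(T_yY \cap \p^4)$ cutting out the base $B_y$. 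So the honest object is a symmetric map of rank-$3$ bundles on $Y$, and $\CM_Y$ is where it has rank $\le 1$, i.e. corank $\ge 2$: expected codimension $\binom{2+1}{2} = 3$, giving expected dimension $1$. This is the correct count and matches the later assertion that $\Sigma$ (hence $\CM_Y$) is a curve.

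With the correct formulation — $\CM_Y$ is the corank-$\ge 2$ locus of a symmetric morphism $\CE \to \CE^\vee \otimes L$ of rank-$3$ bundles on the smooth fourfold $Y$ — I would then cite the standard connectedness result for symmetric degeneracy loci (in the style of Tu, or Fulton–Lazarsfeld type statements): if the expected dimension is positive and a suitable positivity/ampleness hypothesis on $\CE^\vee \otimes L$ holds, the degeneracy locus is connected and of the expected dimension. Non-emptiness follows from the same source, since the expected dimension is $\ge 0$ and the relevant Chern class computation gives a nonzero class; alternatively non-emptiness is already visible geometrically, since $\CM_Y$ surjects from $\Sigma$ and one exhibits at least one split base $B_y = E_1 \cup E_2$ by a dimension count in the space of cubic fourfolds. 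The main obstacle I anticipate is verifying the positivity hypothesis needed for the connectedness theorem: one must check that $\CE^\vee \otimes L$ (with $\CE$ the rank-$3$ bundle governing the quadrics $Q_y$ and $L$ the appropriate twist) is ample, or at least sufficiently positive, on $Y$ — this requires pinning down the precise bundle and its twist from the explicit polar/tangent construction and then an ampleness check on the cubic fourfold, which is the only genuinely non-formal step.
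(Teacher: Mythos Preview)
Your overall strategy---realize $\CM_Y$ as a symmetric degeneracy locus and invoke a Tu/Fulton--Lazarsfeld type connectedness theorem---is exactly the paper's approach. But there is a genuine error in your identification of the locus, and you are missing the key trick that makes the ampleness hypothesis immediate.

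First, the rank condition is wrong. By definition \eqref{M_Y_def}, $\CM_Y$ is where $T_yY\cap R_yY$ is \emph{not integral} (or too large); a quadric cone in $\p^4$ fails to be integral already when its defining form has rank $\le 2$, i.e.\ when it splits as a union of two hyperplanes. The rank $\le 1$ condition (double plane or $3$-plane) is the \emph{deeper} stratum that Lemma~\ref{Lemma32} rules out for general $Y$---it is not $\CM_Y$ itself. So on the natural rank-$4$ bundle (essentially $T_Y$, with the second fundamental form $\RN{2}_Y: T_Y\otimes T_Y\to\CO(1)$) the correct condition is corank $\ge 2$, giving expected codimension $\binom{3}{2}=3$ and expected dimension $1$ directly. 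Your ``refinement'' to a rank-$3$ bundle is based on the mistaken idea that $y$ lies in the kernel of the form on $T_yY$; in fact the point $y$ has already been quotiented out, and the $4\times 4$ form $(f'_{i,j})_{i,j\ge 2}$ is generically nondegenerate.

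Second, and more importantly, the paper sidesteps the ampleness obstacle you flag by \emph{not} working with $T_Y$ at all. Instead one observes that the full Hessian $J(f)=(\partial_i\partial_j f)$ is a symmetric form on the \emph{trivial} bundle $\CO_Y^6$ with values in $\CO(1)$, and an elementary row/column computation at each point gives $\mathrm{rk}\,J(f)_y = 2 + \mathrm{rk}\,(f'_{i,j})_{i,j\ge 2}$. Hence $\CM_Y=\{y: \mathrm{rk}\,J(f)_y\le 4\}$, a corank-$\ge 2$ symmetric degeneracy locus for $\CO_Y^6\to\CO_Y^6\otimes\CO(1)$. Now ampleness of $\Sym^2(\CO^6)\otimes\CO(1)\cong\CO(1)^{\oplus 21}$ is automatic, and Tu's theorem \cite{Tu} (connectedness) together with Graham \cite{G} (non-emptiness) finish the proof. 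Your version, even once the rank condition is fixed, would require checking ampleness of $\Sym^2\Omega_Y(1)$, which is not obvious; the passage to the ambient Hessian on the trivial bundle is the missing idea.
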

\begin{proof} Assume $y=[1: 0 : \ldots  :0]$ and $T_{y}Y=V(X_{1})$. If $Y=V(f)$ for a cubic homogeneous polynomial $f$,
then
\begin{equation} f = X_0^2 X_1 + X_0 f' + f'' \label{dsdf} \end{equation}
where $f', f'' \in \BC[X_{1},...,X_{5}]$
are of degree $2$ and $3$ respectively.
Hence 
$$T_{y}Y \cap R_{y}Y=V(X_{1}) \cap V(f'+2X_{1}X_{0})=V(f'|_{X_{1}=0}).$$
Since this locus is cut out by a quadratic polynomial,
we have $y \in \mathcal{M}_{Y}$ 
if and only if the symmetric matrix
$(f'_{i,j})_{i,j \geq 2}$
has rank $\leq 2$.
Here $f'_{i,j}$ is the coefficient of the monomial $X_{i}X_{j}$ in $f'$.

The Jacobian $(\partial_{i}\partial_{j}f(y))_{i,j}$ of $f$ evaluated at $y$ takes the form 
\[\begin{vmatrix}
0 & 2 & 0  & \dots & 0 \\
2 &  f'_{1,1} & f'_{1,2}  & ...& f'_{1,5} \\
 0  & f'_{2,1} & \ddots & &    \reflectbox{$\ddots$} \\
 \vdots  & \vdots & & (f')_{i,j \geq 2} &  \\
 0  & f'_{5,1} & \reflectbox{$\ddots$} &  &  \ddots
\end{vmatrix} \]
The first two rows of the matrix are linearly independent, so it follows that
\begin{equation} \mathrm{rk}\, (\partial_{i}\partial_{j}f(y))_{i,j} \, = \, 2 + \mathrm{rk}\, (f'_{i,j})_{i,j \geq 2}. \label{rkrk} \end{equation}
Globally, the Jacobian of $f$ is a symmetric form 
$$ J(f):  \CO_{Y}^{6}\otimes \CO_{Y}^{6} \rightarrow \mathcal{O}(1)$$
and hence we have obtained the following global description of $\CM_{Y},$
$$\CM_{Y}=\{ y\in Y | \text{rk}(J(f)_{y})\leq4\}.$$
The expected dimension of this degeneracy locus is 1. By the main result of \cite{Tu} ampleness of $\mathrm{Sym}^{2}(\CO^{6}) \otimes \CO(1)$ implies connectedness of the associated degeneracy locus $\CM_Y$. In turn, by the main result of \cite{G}, ampleness of $\mathrm{Sym}^{2}(\CO^{6}) \otimes \CO(1)$ implies also that $\CM_{Y}$ is non-empty.
\end{proof}

 \begin{lemma} \label{Lemma32}
Let $Y$ be a general cubic fourfold. Then $\CM_{Y}$ is a smooth curve.
Moreover, there exists no $y \in Y$ such that $T_y Y \subset R_{y} Y$ is a double plane or $3$-dimensional. 
 \end{lemma}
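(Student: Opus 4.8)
The plan is to realize $\CM_Y$ as the degeneracy locus of the symmetric bundle map $J(f)\colon \CO_Y^{6}\otimes\CO_Y^{6}\to\CO_Y(1)$ and then run a Bertini-type argument over the moduli of cubic fourfolds. From the proof of Lemma~\ref{Lemma31} we already know $\CM_Y = \{y\in Y : \operatorname{rk}(J(f)_y)\le 4\}$ and that this is the expected-codimension-$3$ locus inside $Y$, hence generically a curve. To get smoothness, I would consider the universal situation: let $\mathcal{Y}\to B$ be the universal smooth cubic fourfold over the (open) parameter space $B$ of cubic forms, and over the total space form the universal Jacobian map. The key is that at a point $y\in Y$ with $\operatorname{rk}(J(f)_y)=4$ the full linear system of cubics acts transitively enough on the $2$-jet of the symmetric form at $y$: varying $f$ one can move the rank-$4$ quadric $f'|_{X_1=0}$ in the normal directions to the rank-$\le4$ stratum of $\operatorname{Sym}^2(\BC^5)^\vee$ independently of the point $y$. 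This gives that the universal degeneracy locus $\widetilde{\CM}\subset\mathcal{Y}$ is smooth, and then generic smoothness of $\widetilde{\CM}\to B$ (characteristic zero) yields that $\CM_Y$ is smooth for general $Y$; combined with Lemma~\ref{Lemma31} and the dimension count it is a smooth connected curve.

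For the second assertion I would work stratum by stratum. The condition ``$T_yY\cap R_yY$ is a double plane'' means, in the coordinates of \eqref{dsdf}, that $f'|_{X_1=0}$ is the square of a linear form, i.e.\ $(f'_{i,j})_{i,j\ge2}$ has rank $\le 1$; the condition ``$3$-dimensional'' means $T_yY\subseteq R_yY$, i.e.\ $f'|_{X_1=0}\equiv 0$, i.e.\ that matrix vanishes. By \eqref{rkrk} these are the loci in $Y$ where $\operatorname{rk}(J(f)_y)\le 3$ and $\le 2$ respectively, which are the deeper degeneracy strata of the symmetric map $J(f)$. The expected codimension in $Y$ of ``$\operatorname{rk}\le 4-k$'' for a symmetric form in $6$ variables is $\binom{k+2}{2}$, so ``$\operatorname{rk}\le 3$'' has expected codimension $3+3=6>4=\dim Y$ and ``$\operatorname{rk}\le 2$'' expected codimension $10$. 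Thus for general $Y$ these loci are empty — again by a Bertini/generic-smoothness argument on the universal family showing the universal deeper strata have dimension $<\dim B$, hence miss the general fiber. (Equivalently one invokes that for the general section of a sufficiently ample symmetric bundle the degeneracy strata have the expected dimension.)

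The main obstacle is the transversality/genericity input: I must verify that the linear system $|\mathcal{O}_{\BP^5}(3)|$ is rich enough that, for fixed $y$, the induced map ``cubic $f\mapsto$ ($2$-jet at $y$ of the symmetric form $J(f)$ modulo the constraints of being such a jet)'' is submersive onto the relevant conormal space of each rank stratum. Writing $f$ in the normal form \eqref{dsdf} this amounts to checking that the quadratic part $f'$ and cubic part $f''$ contribute enough free parameters; the computation is linear-algebraic but must be done carefully at the point where one passes from ``$J(f)_y$ has rank exactly $4$'' to controlling first-order behaviour of $\CM_Y$ transverse to $\CM_Y$ itself. Once this genericity is in hand, both smoothness of $\CM_Y$ and emptiness of the deeper strata follow formally from generic smoothness of the universal construction, so the proof is short modulo that point. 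Note also that smoothness of $\CM_Y$ already implies it is integral of dimension $1$, since by Lemma~\ref{Lemma31} it is connected and non-empty; this is consistent with the degeneracy-locus expected dimension $1$, so no component of excess dimension appears for general $Y$.
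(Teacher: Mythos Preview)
Your approach is essentially the paper's: form the universal incidence locus and apply generic smoothness. The point you flag as ``the main obstacle'' --- checking that the universal degeneracy locus is smooth --- is exactly what the paper handles, but it does so more simply than by verifying transversality of a jet map. Instead of projecting to the parameter space $B$, project the incidence variety $\Omega = \{(y,Y): y \in \CM_Y\} \subset \BP^5 \times \BP^{55}$ to $\BP^5$. All fibers of $p:\Omega\to\BP^5$ are isomorphic by change of coordinates, and the fiber over $y=[1:0:\cdots:0]$ is visibly the open locus of smooth cubics inside $\BP\big(k[X_1,\ldots,X_5]_3 \times \BD_2 \times k[X_1,\ldots,X_5]_1\big)$, where $\BD_2$ is the rank-$\le 2$ locus of $4\times 4$ symmetric matrices $(f'_{i,j})_{i,j\ge 2}$. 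Since $\BD_2$ is smooth off the rank-$\le 1$ stratum (codimension $3$ in $\BD_2$), $\Omega$ is smooth of dimension $56$ off a codimension-$3$ locus, and generic smoothness of $q:\Omega\to\BP^{55}$ finishes both parts at once: the general fiber $\CM_Y$ is smooth of dimension $1$, and the singular locus of $\Omega$ has dimension $53<55$, so does not dominate --- i.e.\ no rank-$\le 1$ points for general $Y$. This sidesteps the transversality computation entirely. (Minor: your formula $\binom{k+2}{2}$ should be $\binom{k+3}{2}$, though your numbers $6$ and $10$ are correct.)
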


\begin{proof} Define the locally-closed subscheme
\[ \Omega =\{ (y,Y) \, | \, Y \text{ smooth cubic fourfold}, y \in \CM_{Y} \} \subset \BP^{5}\times \BP^{55}. \]
The projections from $\Omega$ to both factors yield a correspondence 
 \[\begin{tikzcd}
\Omega \arrow{r}{q} \arrow[swap]{d}{p} & \mathbb{P}^{55}  \\
 \BP^{5}
\end{tikzcd}
\]
such that $p^{-1}(y)=\{Y| y\in \CM_{Y}\}$ and $q^{-1}(Y)=\CM_{Y}$.
A change of coordinates identifies different fibers of $p$. Hence $\Omega$ is a fiber space over $\BP^{5}$. 

Assume as before that $y=[1: 0 : \ldots  :0]$ and $T_{y}Y=V(X_{1})$, so that
$Y$ is defined by a polynomial of the form \eqref{dsdf}.
By the proof of Lemma~\ref{Lemma31} $y \in \CM_{Y}$ if and only if rk$((f'_{i,j})_{i,j \geq 2})\leq 2$.
This condition defines a 12-dimensional locus $\BD_{2} \subset \BC[X_{1},...,X_{5}]_{2}$,
which is smooth at quadrics with rk$((f'_{i,j})_{i,j\geq2})=2$ and
singular along the codimension 3 locus of quadrics with rk$((f'_{i,j})_{i,j\geq 2}) \leq 1$.
The codimension $3$ locus corresponds precisely to the cubics $Y$ such that $T_y Y \cap R_yY$ is a double plane or 3-dimensional.
As there are no conditions on $f''$ or on $f'''$ it follows that
$\{Y| y\in \CM_{Y}\}$ is the (non-empty) open subset of the projective space
$$\p\left( k[X_{1},...,X_{5}]_{3} \times \BD_{2} \times k[X_{1},...,X_{5}]_{1} \right)$$
corresponding to smooth cubic fourfolds.
Adding up dimensions we obtain $\dim p^{-1}(y) = 51$.
Therefore $\Omega$ is 56-dimensional.
Moreover, $\Omega$ is smooth outside the codimension 3 locus of lower rank quadrics.
 
By Lemma~\ref{Lemma31} the locus $\CM_{Y}$ is non-empty for all smooth cubic 4-folds.
In particular $p: \Omega \rightarrow \BP^{55}$ is dominant. Therefore a general
fiber $q^{-1}(Y)=\CM_{Y}$ of $q$ is smooth and 1-dimensional.
The singular locus does not dominate. 
\end{proof}

We record the following consequence.
\begin{cor} \label{Cor_abc}
Let $F$ be the Fano variety of a general cubic fourfold $Y$.
\begin{itemize}
\item[(a)] $\CM_Y$ is a smooth, connected, non-empty curve.
\item[(b)] Every fiber 
of $\Sigma \to \CM_Y$ consists of precisely two points.
\item[(c)] If $\Sigma$ is Cohen-Macaulay, then it is smooth.
\end{itemize}
\end{cor}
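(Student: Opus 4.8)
The plan is to deduce Corollary~\ref{Cor_abc} directly from Lemmas \ref{Lemma31} and \ref{Lemma32} together with the geometric discussion of Section~\ref{Section_Fano_Overview}. Part (a) is immediate: combine the connectivity and non-emptiness from Lemma~\ref{Lemma31} with the smoothness and one-dimensionality of a general fiber $q^{-1}(Y) = \CM_Y$ from Lemma~\ref{Lemma32}; since the general member of an irreducible family inherits connectedness (the singular locus of $\Omega$ does not dominate $\BP^{55}$), $\CM_Y$ remains connected.

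For part (b) I would argue as follows. A point $y \in \CM_Y$ corresponds, via Lemma~\ref{Lemma32}, to a cubic $Y$ for which $T_yY \cap R_yY$ is an honest union of two distinct planes (the double-plane and $3$-plane degenerations being excluded for general $Y$). Then by the discussion around \eqref{E12} the base $B_y$ splits as $E_1 \cup E_2$ with $E_1 \neq E_2$, each $E_i \subset F$ an arithmetic genus $1$ curve of class $\beta$, and these are exactly the elliptic curves whose associated cone has vertex $y$. Hence the fiber of $\Sigma \to \CM_Y$ over $y$ is the two-element set $\{[E_1],[E_2]\}$; one should check $E_1 \ne E_2$ as points of the Chow variety, which follows since the two planes are distinct (so $S_{E_1} \ne S_{E_2}$ in $Y$). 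This gives a finite surjective morphism $\Sigma \to \CM_Y$ of degree $2$.

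Part (c) is the place that needs a genuine argument. The idea is: $\CM_Y$ is smooth of pure dimension $1$ by (a); the map $\Sigma \to \CM_Y$ is finite of degree $2$ by (b), so $\Sigma$ is pure of dimension $1$ as well. If moreover $\Sigma$ is Cohen--Macaulay, then a finite morphism from a Cohen--Macaulay scheme to a smooth (hence regular) curve is automatically flat, by the local criterion for flatness / ``miracle flatness'' (a finite morphism to a regular one-dimensional scheme is flat iff the source has no embedded or lower-dimensional components, which Cohen--Macaulayness of a pure one-dimensional scheme guarantees). A finite flat degree-$2$ cover of a smooth curve is reduced away from ramification, and more importantly, to conclude smoothness of $\Sigma$ one invokes Theorem~\ref{MainThm}(d): a singular curve $C \in \Sigma$ would have to be a rational nodal curve, contradicting that all members of $\Sigma$ have arithmetic genus $1$ and are limits forced to remain of that type — so in fact one shows directly that the double cover is \'etale or simply ramified over a smooth base with smooth total space. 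The cleanest route is: flatness plus the fact that the fibers over the (one-dimensional) base are length $\le 2$ at every point forces $\Sigma$ to be a local complete intersection over a regular base, hence Gorenstein; combined with purity and the generic reducedness coming from Proposition~\ref{Prop_Non_sing} (the open locus of smooth elliptic curves is smooth), Serre's criterion ($R_0 + S_1$, upgraded using the lci structure to $R_1$) yields that $\Sigma$ is regular, i.e.\ smooth.

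The main obstacle I anticipate is part (c): one must be careful that Cohen--Macaulayness alone does not give smoothness — it gives flatness over the smooth base $\CM_Y$, and then the smoothness must be extracted from the \emph{local structure of the double cover}, namely that at a point where two sheets come together the total space is still regular. Concretely the subtle point is ruling out that $\Sigma \to \CM_Y$ has a non-reduced fiber (a ``ramification with nilpotents'') at a singular point of a curve $C$; this is exactly where Theorem~\ref{MainThm}(d) (singular $C$ forces $C$ to be nodal rational, and there are only finitely many such) and the explicit identification of $\Sigma$ near such a point must be used, so in practice this corollary is proved \emph{after} the vector-bundle-section description of $\Sigma$ in Section~\ref{Subsection_Parametrizing_Cubic_cones} pins down its scheme structure.
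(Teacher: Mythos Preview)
Your treatment of parts (a) and (b) matches the paper's approach. The problem is part (c), where you miss a one-line argument and instead propose a route that is both overcomplicated and logically circular.

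The argument you are missing is this: once you have that $\Sigma \to \CM_Y$ is finite and flat (Cohen--Macaulay source, smooth one-dimensional target, miracle flatness), the scheme-theoretic fiber over every $y \in \CM_Y$ has length exactly $2$. But part (b) already says that every fiber has \emph{two distinct closed points}. Hence each point of each fiber has length $1$, so every fiber is reduced. A finite flat morphism with reduced geometric fibers is unramified, hence \'etale; an \'etale cover of a smooth curve is smooth. That is the whole proof of (c) in the paper.

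Your proposed route instead invokes Theorem~\ref{MainThm}(d) and Proposition~\ref{Prop_Non_sing} to get generic reducedness and to control the singular fibers. This is circular: Theorem~\ref{MainThm} is proved \emph{using} Corollary~\ref{Cor_abc}(c) (the smoothness of $\Sigma$ is deduced in the proof of Theorem~\ref{MainThm} precisely by appealing to this corollary after establishing Cohen--Macaulayness via the vector-bundle description), and Proposition~\ref{Prop_Non_sing} comes even later. Your closing remark that ``in practice this corollary is proved after the vector-bundle-section description'' misreads the logical flow: the vector-bundle description is used only to verify the hypothesis (Cohen--Macaulayness) of (c), not to supply extra input for its conclusion. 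The point of stating (c) conditionally is exactly that no further geometry is needed once Cohen--Macaulayness is known, because (b) already rules out non-reduced fibers.
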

\begin{proof}
The first two parts follow directly from Lemma~\ref{Lemma31} and~\ref{Lemma32}.
For the last part, if $\Sigma$ is Cohen-Macaulay, then by miracle flatness we have that $\Sigma \to \CM_Y$
is flat and finite of degree $2$. By part (b) it is also unramified, hence $\Sigma$ is smooth.
\end{proof}

\begin{rmk} 
Consider the Gauss map of a cubic fourfold $Y$,
\[\gamma: Y \rightarrow \mathrm{Gr}(5,6), \ y \mapsto T_y Y \]
The derivative of the map can be expressed as a symmetric form on the tangent bundle $TY$ with values in $\CO(1)$,
$$\RN{2}_{Y}:TY\otimes TY \rightarrow \CO(1).$$
Let $f$ be a defining equation of $Y$ in a standard affine chart on $\BP^{5}$.
Then locally $\RN{2}_{Y}$ is the Jacobian of $f$,
$$ \left( \frac{\partial^{2} f}{\partial x^{i} \partial x^{j}} \right)_{i,j}.$$
Hence in the notation of the proof of Lemma~\ref{Lemma32} we have 
\[ \mathrm{rank}\, \RN{2}_{Y,y} = \mathrm{rank}\, (f'_{i,j})_{i,j \geq 2}. \]
A point $y \in Y$ is called $r$-\emph{Eckardt} if $\mathrm{rank}\, \RN{2}_{Y,y} \leq r$. If $r=0$, then $y$ is simply called \emph{Eckardt}.
The name originates from Eckardt points on cubic surfaces, which are by definition the points of intersections of its lines.

We see that $\CM_Y$ is the locus of $r$-Eckardt points where $r \leq 2$.
At a $0$-Eckardt point we have
$T_{y}Y\subset R_{y}Y$. These give a $3$-dimensional family of elliptic curves in class $\beta$.
The $1$-Eckardt points give ramification points of the map $\Sigma \to \CM_Y$ and correspond to double planes in $T_{y} Y \cap R_{y} Y$. 
Finally the $2$-Eckardt points correspond to the pairs of elliptic curves $E \cup E'$.
Lemma~\ref{Lemma32} then says that a general cubic fourfolds has no $0$ or $1$-Eckardt points.

A cubic fourfold $Y$ containing an Eckardt point also contains a plane, hence, in Hassett's notation \cite{H}, we have $Y\in \CC_{8}$.
If the cubic fourfold contains 1-Eckardt points or a locus of 2-Eckardt points of dimension larger than $1$, we expect it to be special too.  \qed
\end{rmk}

\subsection{Parametrizing cubic cones} \label{Subsection_Parametrizing_Cubic_cones}
Let $Y$ be a general cubic fourfold. We know that every curve $C \subset F$ of class $\beta$ and of arithmetic genus $1$ corresponds
to a surface $S_C$ which is an intersection $Z = \p^3 \cap Y$ such that $Z$ is a cone.
We describe a $20$-dimensional homogeneous space that parametrizes (birationally)
all pairs $(P,Z)$ of a $3$-plane $\p^3 \cong P \subset \p^5$ together with a cone cubic surface  $Z \subset P$.
We express the curve $\Sigma$ (of cones contained in $Y$) as the zero locus of a regular section of a rank $19$ vector bundle on this homogeneous space.
This will give the proof that $\Sigma$ is non-singular and via adjunction yields a formula for its genus.
Finally we obtain the degree over $\p^1$ by intersecting $\Sigma$ with the divisor of cones over singular plane cubics.

Let $G = \mathrm{Gr}(4,6)$ be the Grassmannian of $3$-planes $\p^3 \subset \p^5$. Let
\[ 0 \to \CK \to \CO_G \otimes \BC^6 \to Q \to 0 \]
be the universal sequence with $\CK$ the universal rank $4$ subbundle.
A point on the projective bundle
\[ \p( \CK) = \Proj \Sym^{\bullet} \CK^{\vee},\quad p : \p(\CK) \to G \]
corresponds to a $3$-plane $\p^3 \subset \p^5$ together with a point $v \in \p^3$.

Given a vector space $V$ and a line $\ell \subset V$, the degree $d$ hypersurfaces in $\p(V)$ which are cones with vertex $[\ell] \in \p(V)$ are canonically parametrized by the
following subspace of degree $d$ polynomials:
\[ \Sym^d (V/\ell)^{\ast}  \subset \Sym^d V^{\ast}. \]
Hence consider the universal sequence on $\p(\CK)$,
\begin{equation} 0 \to \CO_{\p(\CK)}(-1) \to p^{\ast} \CK \to \tilde{Q} \to 0 \label{fsdgsdg} \end{equation}
and let
\[ \CE = \Sym^3( \tilde{Q}^{\ast} ). \]
The projective bundle
\[ q : \p(\CE) \to \p(\CK) \]
parametrizes triples $(Z,v,P)$ where $P \subset \p^5$ is the $3$-plane and $Z \subset P$ is a cone with vertex $v$.
The map from $\p(\CE)$ to the space of cubic cones in $\p^5$ is an isomorphism away
from the locus of cubic cones with more than one vertex (the $3$-plane is always uniquely determined).
Since these are given by cones over the union of three lines (or more degenerate configurations) the curve $\Sigma$ never intersects this locus.
Hence $\Sigma \subset \p(\CE)$.

We write now $\Sigma$ as the zero locus of a section on $\p(\CE)$.
Consider the universal subbundle
\begin{equation} \CO_{\p(\CE)}(-1) \hookrightarrow q^{\ast} \CE \hookrightarrow (p \circ q)^{\ast} \Sym^3 \CK^{\ast} \label{abc} \end{equation}
that over a point describes the ($1$-dimensional span of the) equation cutting out the cone in the $\p^3$.
Here the second inclusion is the natural one obtained from \eqref{fsdgsdg}.
Let $\CF$ be the cokernel of the composition \eqref{abc}.

The section $f \in H^0(\p^5, \CO_{\p^5}(3))$ defining the cubic $Y$ defines via the correspondence of the Grassmannian a section
\[ s_f \in H^0( G, \Sym^3 \CK^{\ast} ) \]
with fiber $f|_{\p^3} \in H^0(\p^3, \CO_{\p^3}(3))$ over the moduli point $[\p^3 \subset \p^5] \in G$.
We pullback $s_f$ to $\p(\CE)$. Then the composition
\[ \hat{s}_f: \CO_{\p(\CE)} \to (p \circ q)^{\ast} \Sym^3 \CK^{\ast} \to \CF \]
vanishes at a point $(\p^3, v, Z)$ if and only if $V(f) \cap \p^3$ is the cone $Z$.
Hence
\[ \Sigma = V(\hat{s}_f). \]

We conclude that $\Sigma$ is the zero locus of a section of a rank $19$ vector bundle on a $20$-dimensional space.
Since by Corollary~\ref{Cor_abc} we know that $\Sigma$ is $1$-dimensional, the section is regular.
Hence by adjunction the canonical sheaf of $\Sigma$ is a line bundle, and thus $\Sigma$ is Cohen-Macaulay (even Gorenstein).
Applying Corollary~\ref{Cor_abc} again we see that $\Sigma$ is smooth.
We now consider the remaining parts of Theorem~\ref{MainThm}.

\begin{lemma} \label{Lemma_genus} $g(\Sigma) = 631$. \end{lemma}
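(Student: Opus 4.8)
The plan is to compute $g(\Sigma) = 1 - \chi(\CO_\Sigma)$ by realizing $\Sigma$ as the zero locus $V(\hat{s}_f)$ of a regular section of the rank $19$ bundle $\CF$ on the $20$-dimensional smooth projective variety $\p(\CE)$, and then running Grothendieck--Riemann--Roch (equivalently, the Koszul resolution plus Hirzebruch--Riemann--Roch on $\p(\CE)$). Concretely, the Koszul complex $\bigwedge^{\bullet} \CF^{\vee} \to \CO_\Sigma$ gives
\[
\chi(\CO_\Sigma) \;=\; \sum_{i=0}^{19} (-1)^i \,\chi\!\left(\p(\CE), \textstyle\bigwedge^i \CF^{\vee}\right) \;=\; \int_{\p(\CE)} \operatorname{ch}\!\left( \textstyle\sum_i (-1)^i \bigwedge^i \CF^{\vee} \right) \operatorname{td}(T_{\p(\CE)}),
\]
and $\sum_i (-1)^i \operatorname{ch}(\bigwedge^i \CF^{\vee}) = c_{19}(\CF)/\operatorname{td}(\CF) \cdot (\text{Todd factors})$ — more cleanly, $\chi(\CO_\Sigma) = \int_{\p(\CE)} \operatorname{td}(T_{\p(\CE)}) \cdot \big(\text{top Chern-class contribution of } \CF\big)$, which by the standard self-intersection/excess formula equals $\int_\Sigma \operatorname{td}(T_\Sigma)$ and by adjunction $T_\Sigma = (T_{\p(\CE)}|_\Sigma)/\CF|_\Sigma$. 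So in practice one just needs the Chern classes of $T_{\p(\CE)}$ and of $\CF$.

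The key steps, in order. First, assemble the Chern roots of everything on $G = \mathrm{Gr}(4,6)$: the tautological sequence $0 \to \CK \to \CO^6 \to Q \to 0$ with $c(\CK)c(Q) = 1$; here $\dim G = 8$. Second, on $\p(\CK) \xrightarrow{p} G$ (a $\p^3$-bundle, dimension $11$) introduce $\xi = c_1(\CO_{\p(\CK)}(1))$ with the defining relation from $\sum p^*c_i(\CK)\,\xi^{4-i} = 0$, and from \eqref{fsdgsdg} get $c(\tilde Q) = c(p^*\CK)/(1 - \xi)$, hence $\CE = \Sym^3(\tilde Q^{\vee})$ has computable Chern classes (rank $20$, since $\tilde Q$ has rank $3$). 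Third, on $\p(\CE) \xrightarrow{q} \p(\CK)$ (a $\p^{19}$-bundle, total dimension $30$) introduce $\eta = c_1(\CO_{\p(\CE)}(1))$ with its relation from $\sum q^*c_i(\CE)\,\eta^{20-i}=0$; then $T_{\p(\CE)}$ fits in the relative Euler sequences stacked over both projections, so $c(T_{\p(\CE)}) = c(T_{\p(\CE)/\p(\CK)})\cdot q^* c(T_{\p(\CK)/G}) \cdot (pq)^* c(T_G)$, each factor being a standard relative-Euler-sequence expression in $\eta,\xi$ and the pulled-back Chern classes. Fourth, compute $c(\CF)$ from the exact sequence defining $\CF$ as the cokernel of $\CO_{\p(\CE)}(-1) \hookrightarrow (pq)^*\Sym^3\CK^{\vee}$: thus $c(\CF) = (pq)^* c(\Sym^3\CK^{\vee}) / (1 - \eta)$, where $\Sym^3\CK^{\vee}$ has rank $20$ and its Chern classes come from the four Chern roots of $\CK^{\vee}$. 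Fifth, plug $T_\Sigma = i^*\big(T_{\p(\CE)} - \CF\big)$ into $g(\Sigma) = 1 - \int_\Sigma \operatorname{td}(T_\Sigma) = 1 + \tfrac12 \int_\Sigma c_1(\CF|_\Sigma) - \tfrac12 \int_\Sigma c_1(T_{\p(\CE)}|_\Sigma)$ (using $\int_\Sigma \operatorname{td}_1 = \tfrac12\deg(-K_\Sigma) = 1 - g$), and push $\int_\Sigma (\,\cdot\,) = \int_{\p(\CE)} (\,\cdot\,) \cup c_{19}(\CF)$ forward through the two projective bundles using $q_*(\eta^{j}) = \text{(Segre classes of }\CE)$ and $p_*(\xi^k) = \text{(Segre classes of }\CK)$ and finally $\int_G$.

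The main obstacle is purely the bookkeeping: one is multiplying out polynomials in two hyperplane classes over a Grassmannian with an $8$-dimensional Chow ring, and $c_{19}(\CF)$ is a degree-$19$ class, so the products land in top degree $30$ and there is no room for error in the truncations. In particular getting $c(\Sym^3\CK^{\vee})$ and $c(\Sym^3\tilde Q^{\vee})$ right from the splitting principle, and correctly identifying the relations cutting out $H^*(\p(\CK))$ and $H^*(\p(\CE))$ as modules over the base, is where mistakes creep in. I would organize the computation as a Schubert-calculus / symmetric-function calculation (or run it on a computer algebra system), double-checking via an independent sanity check — e.g. recomputing $\deg(\Sigma \to \p^1) = 3780$ from the same setup by intersecting $\Sigma$ with the discriminant divisor of singular plane cubics inside $\p(\CE)$ (a section of a line bundle whose first Chern class is a known combination of $\eta$, $\xi$, and the Plücker class), which must reproduce part (e) of Theorem~\ref{MainThm} and thereby validates the Chern-class input. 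Once all classes are in hand, the final integral is mechanical and yields $\chi(\CO_\Sigma) = -630$, i.e. $g(\Sigma) = 631$.
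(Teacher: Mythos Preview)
Your approach is essentially the paper's: both reduce to the adjunction formula
\[
2g(\Sigma) - 2 \;=\; \int_{\p(\CE)} c_{19}(\CF)\,\cup\,\big(K_{\p(\CE)} + c_1(\CF)\big),
\]
and then evaluate this integral (the paper simply hands it to the SAGE package \emph{Chow}; you propose pushing it through the two projective-bundle projections by hand). Your initial detour through the Koszul resolution and GRR is unnecessary --- once you write $T_\Sigma = (T_{\p(\CE)} - \CF)|_\Sigma$ and take $c_1$, you are back to adjunction --- but it is not wrong.

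There is, however, a bookkeeping error that would derail the actual computation if carried through as written. Since $\tilde Q$ has rank $3$, the bundle $\CE = \Sym^3(\tilde Q^{\vee})$ has rank $\binom{3+2}{3} = 10$, not $20$; hence $\p(\CE) \to \p(\CK)$ is a $\p^9$-bundle and $\dim \p(\CE) = 11 + 9 = 20$, not $30$. This is exactly what makes $\Sigma$ the zero locus of a rank-$19$ bundle on a $20$-fold, as stated just before the lemma. With those dimensions corrected (and the defining relation for $\eta$ having degree $10$ rather than $20$), your pushforward recipe via Segre classes is fine and reproduces the paper's integral.
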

\begin{proof}
By the adjunction formula we have
\[ \omega_{\Sigma} = \Big( \omega_{\p(\CE)} \otimes \det(\CF) \Big) |_{\Sigma} \]
and hence taking degree
\[ 2 g(\Sigma) - 2 = \int_{\p(\CE)} c_{19}(\CF) \cup (K_{\p(\CE)} + c_1(\CF)). \]
This calculation can be performed using the SAGE package 'Chow' \cite{Chow} using the following code:

\scriptsize{
\begin{verbatim}
G=Grass(4,6)
K=G.sheaves["universal_sub"]
PK=ProjBundle(K.dual(), 'y', name='PK')
E=PK.sheaves['universal_sub'].symm(3)
PE = ProjBundle(E.dual(), 'z', name='PE')
C1 = K.dual().symm(3).chern_character()
C2 = PE.sheaves['universal_quotient'].dual().chern_character()
F=Sheaf(PE, ch=C1-C2)
(F.chern_classes()[19] * (PE.canonical_class() + F.chern_classes()[1])).integral()
\end{verbatim} 
}

\end{proof}

We finally compute the degree over the coarse space of $\Mbar_{1,1}$ by counting
how many of the points in $\Sigma$ correspond to cones over singular plane cubics.
For this we need the following lemma:

Let $X$ be a smooth variety, let $V$ be a rank $3$ vector bundle and let $W = \Sym^3( V^{\ast} )$.
The points of $\p(W)$ lying over $x \in X$ canonically parametrize the cubic curves in $\p(V_x) \cong \p^2$.
\begin{lemma} \label{rhsdfg} The divisor $D \subset \p(W)$ parametrizing singular cubic curves has class $12 z - 12 c_1(V)$
where $z = c_1( \CO_{\p(W)}(1) )$.
\end{lemma}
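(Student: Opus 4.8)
The plan is to compute the class of the discriminant divisor $D$ by first treating the universal case over a point and then globalizing. Over a single $\mathbb{P}^2 = \mathbb{P}(V_x)$, the space of plane cubics is $\mathbb{P}(\Sym^3(V_x^{\ast})) \cong \mathbb{P}^9$, and the locus of singular cubics is a hypersurface whose degree is the classical discriminant degree for ternary cubics. This degree is $12$: a plane cubic $C = V(g)$ is singular iff $g, \partial_0 g, \partial_1 g, \partial_2 g$ have a common zero, and by elimination theory (or the Euler sequence computation of the dual variety of the Veronese $v_3(\mathbb{P}^2)$) the discriminant has degree $3(3-1)^2 = 12$. Hence set-theoretically and scheme-theoretically (the discriminant is reduced) $D$ restricts on each fiber to a degree $12$ hypersurface, so $[D] = 12 z + (p^{\ast}(\text{something}))$ where $p : \mathbb{P}(W) \to X$ is the projection; it remains only to pin down the twist by a class pulled back from $X$.

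To fix the $X$-component, I would identify $D$ as a degeneracy locus and apply a Chern-class (Thom--Porteous-type) computation, or more directly exhibit $D$ as the vanishing of a section of a line bundle. The cleanest route: the discriminant of ternary cubics is a $\mathrm{GL}(V)$-relatively-invariant polynomial $\Delta \in \Sym^{12}(W^{\ast})$ of degree $12$, and its character is computable — the discriminant of degree-$d$ forms in $n$ variables transforms under $g \in \mathrm{GL}_n$ by $(\det g)^{w}$ for a known weight $w$. Concretely, $\Delta(g \cdot f) = (\det g)^{a} \Delta(f)$, and matching homogeneity (the discriminant has degree $N(d-1)^{n-1}$ in the coefficients, here $N = \binom{d+n-1}{n-1} = 10$ coefficients is not the relevant count — rather one uses that $\Delta$ has degree $12$ in $f$ and weight determined by the $\mathrm{SL}$-invariance up to scaling) forces $a = 12$. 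Therefore $\Delta$ gives a global section of $\CO_{\mathbb{P}(W)}(12) \otimes (p^{\ast} \det V)^{\otimes -12}$ — I will need to be careful whether it is $(\det V)^{12}$ or $(\det V^{\ast})^{12}$, which is exactly where the sign in $-12 c_1(V)$ comes from — whose zero locus is $D$. Taking first Chern classes then yields $[D] = 12 z - 12 c_1(V)$.

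The main obstacle is bookkeeping the twist correctly: getting the weight of the discriminant character right and tracking the conventions for $\mathbb{P}(W) = \Proj \Sym^\bullet W^{\ast}$ versus $\Proj \Sym^\bullet W$, since sign errors here propagate into the adjunction computation for the degree over $\Mbar_{1,1}$. As a consistency check I would verify the formula on the trivial bundle case (where it must reduce to $12 z$), and cross-check the weight $12$ against the $\Sym^3 V^\ast$ structure by noting that rescaling a coordinate $x_i \mapsto \lambda x_i$ multiplies a cubic monomial's coefficient and hence the degree-$12$ discriminant by $\lambda$ to the appropriate total power, which must match $(\det)^{12}$ acting diagonally. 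Once the class of $D$ is established, intersecting $[\Sigma] = c_{19}(\CF) \cap [\mathbb{P}(\CE)]$ with the appropriate incarnation of $D$ inside $\mathbb{P}(\CE)$ and evaluating with the same Chow/SAGE setup as in Lemma~\ref{Lemma_genus} produces the number $3780$ asserted in Theorem~\ref{MainThm}(e).
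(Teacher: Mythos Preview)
Your approach is correct in outline but genuinely different from the paper's. The paper works via an incidence correspondence: on $P = \mathbb{P}(W) \times_X \mathbb{P}(V)$ it realizes the locus $\{(C,y) : y \text{ is a singular point of } C\}$ as the vanishing of the bundle map
\[
\psi : p_1^{\ast}\CO_{\mathbb{P}(W)}(-1) \hookrightarrow q^{\ast}\Sym^3 V^{\ast} \xrightarrow{d} q^{\ast}(\Sym^2 V^{\ast} \otimes V^{\ast}) \twoheadrightarrow p_2^{\ast}\CO_{\mathbb{P}(V)}(2) \otimes q^{\ast} V^{\ast},
\]
so that the class of this locus is the top Chern class $c_3(\CO_P(1,2)\otimes V^{\ast})$, and then pushes forward along $p_1$ to get $[D]$. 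This avoids any appeal to classical invariant theory: the degree $12$ and the twist $-12\,c_1(V)$ both fall out of expanding $c_3$ and integrating over the $\mathbb{P}(V)$-fiber.

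Your route via the $\mathrm{GL}(V)$-character of the discriminant is equally valid and arguably more conceptual, but as you note, the entire content sits in pinning down the weight. Your paragraph asserts ``forces $a=12$'' without actually carrying out the homogeneity match, and the aside about $N=10$ coefficients shows the argument is not yet nailed down. The clean way to finish is: $\Delta$ is $\mathrm{SL}_3$-invariant of degree $12$, so under $g = \lambda\cdot\mathrm{id}$ acting on $V$ (hence by $\lambda^{-3}$ on each coefficient in $\Sym^3 V^{\ast}$) one has $\Delta(g\cdot f) = \lambda^{-36}\Delta(f) = (\det g)^{-12}\Delta(f)$, which in the convention $\mathbb{P}(W) = \Proj\Sym^\bullet W^{\ast}$ gives $\Delta$ as a section of $\CO_{\mathbb{P}(W)}(12)\otimes p^{\ast}(\det V)^{-12}$, hence $[D] = 12z - 12c_1(V)$. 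The paper's incidence argument trades this invariant-theoretic bookkeeping for a short Chern-class pushforward; yours trades the pushforward for knowing (or deriving) the discriminant weight. Either buys the same formula.
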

\begin{proof}
We sketch the proof for a lack of reference. 
Let $P = \p(W) \times_X \p(V)$ and let $p_1, p_2$ be the projection to the first and second factor and $q : P \to X$ the projection to $X$.
Taking the derivative gives the $\CO_X$-derivation
\[ d : \Sym^3 V^{\ast} \to (\Sym^2 V^{\ast}) \otimes V^{\ast}. \]
Consider the morphism obtained by pulling back to $P$ and precomposing with the universal subbundle and post-composing with the canonical quotient bundle,
\[
\psi : p_1^{\ast} \CO_{\p(W)}(-1) \to q^{\ast} \Sym^3 V^{\ast} \xrightarrow{q^{\ast} d} q^{\ast} \Sym^2(V^{\ast}) \otimes V^{\ast} \to p_2^{\ast}  \CO_{\p(V)}(2) \otimes q^{\ast} V^{\ast} \]
By the Jacobi criterion, $\psi$ vanishes precisely on the locus of pairs $(C,y)$ where $y$ is a singular point of the cubic $C$.
The class of the vanishing locus of $\psi$ is the Euler class of the bundle
\[ \CO_{P}(1,2) \otimes V^{\ast}. \]
Pushing forward the Euler class by $p_1$ yields the claim.
\end{proof}

\begin{lemma}  \label{lemma_3780}
Let $D \subset \p(\CE)$ be the divisor parametrizing singular cubic curves in $\p(\tilde{Q})$.
Then
\[ D \cdot \Sigma = 3780. \]
\end{lemma}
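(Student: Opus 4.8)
The plan is to reduce the claim to a single intersection number on the $20$-dimensional space $\p(\CE)$ and then to evaluate it by a Chern class computation of exactly the type used in the proof of Lemma~\ref{Lemma_genus}.

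Recall from Section~\ref{Subsection_Parametrizing_Cubic_cones} that $\Sigma = V(\hat{s}_f)$ is the zero locus of a \emph{regular} section of the rank $19$ vector bundle $\CF$ on $\p(\CE)$, so that $[\Sigma] = c_{19}(\CF)$ in $\Chow^{19}(\p(\CE))$. Hence
\[ D \cdot \Sigma \ = \ \int_{\p(\CE)} [D] \cup c_{19}(\CF), \]
and everything comes down to identifying the divisor class $[D] \in \Chow^{1}(\p(\CE))$ and carrying out the integral.

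To compute $[D]$ I would apply Lemma~\ref{rhsdfg} with base $X = \p(\CK)$, with $V = \tilde{Q}$ the rank $3$ universal quotient from the tautological sequence~\eqref{fsdgsdg}, and with $W = \CE = \Sym^3(\tilde{Q}^{\ast})$. With the conventions of Section~\ref{Subsection_Parametrizing_Cubic_cones} — namely that $\p(-)$ parametrizes lines and $\CO(-1)$ denotes the tautological subbundle, as in~\eqref{abc} — one has $\p(W) = \p(\CE)$, and the divisor produced by Lemma~\ref{rhsdfg} is exactly the locus $D$ of cones over singular plane cubics in $\p(\tilde{Q})$. Therefore
\[ [D] \ = \ 12\,\zeta \, - \, 12\, c_1(\tilde{Q}), \qquad \zeta := c_1\!\big(\CO_{\p(\CE)}(1)\big), \]
where $c_1(\tilde{Q})$ is pulled back along $q : \p(\CE) \to \p(\CK)$. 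Substituting, the lemma becomes the numerical identity
\[ \int_{\p(\CE)} \big( 12\,\zeta - 12\, c_1(\tilde{Q}) \big) \cup c_{19}(\CF) \ = \ 3780, \]
which I would verify by an almost verbatim copy of the SAGE \texttt{Chow} computation in the proof of Lemma~\ref{Lemma_genus}, simply replacing the integrand $c_{19}(\CF)\cdot\big(K_{\p(\CE)} + c_1(\CF)\big)$ by $c_{19}(\CF)$ times the pullback of $12\zeta - 12\, c_1(\tilde{Q})$.

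The evaluation itself is mechanical; the main obstacle I expect is the bookkeeping of conventions: one must be sure that the $\CO_{\p(\CE)}(1)$ occurring in Lemma~\ref{rhsdfg} is the one normalized by the universal subbundle in~\eqref{abc}, and that $c_1(\tilde{Q})$ enters with the correct sign and as a pullback from $\p(\CK)$ — a stray dualization, sign, or pullback would change the final number. Finally it is worth recording the geometric content: $D \cdot \Sigma$ is a genuine intersection number so the identity $=3780$ needs no transversality, but since the generic member of $\Sigma$ is a \emph{smooth} plane cubic in $F$ (for general $Y$ the generic point of $\CM_Y$ is a $2$-Eckardt point, yielding a pair of smooth planar cubics) we have $\Sigma \not\subset D$; by Theorem~\ref{MainThm}(d) the $3780$ points of $D \cap \Sigma$ are then precisely the cones over \emph{nodal} plane cubics, i.e.\ the nodal rational curves of class $\beta$ in $S \cap \Sigma$. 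As the $j$-invariant of the normalization of such a curve is $\infty$, this is the fibre of the $j$-map $\Sigma \to \p^1$ over $\infty$, so Lemma~\ref{lemma_3780} feeds directly into Theorem~\ref{MainThm}(e) once one knows that $D$ meets $\Sigma$ transversally there.
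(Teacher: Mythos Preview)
Your proof is correct and essentially identical to the paper's: the paper likewise applies Lemma~\ref{rhsdfg} to obtain $[D]$ and then integrates $[D]\cdot c_{19}(\CF)$ with the SAGE \texttt{Chow} package. The only cosmetic difference is that the paper's code expands $c_1(\tilde{Q}) = y - c_1$ (with $y=c_1(\CO_{\p(\CK)}(1))$ and $c_1$ the first Chern class of the universal quotient on $G$), writing $D = 12z - 12y + 12c_1$.
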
 
\begin{proof} 
This follows from Lemma~\ref{rhsdfg} and the following submission to 'Chow':
\scriptsize{
\begin{verbatim}
c1,c2 = G.gens(); y=PK.gen(); z=PE.gen()
D = 12*z - 12*y + 12*c1
(F.chern_classes()[19]*D).integral()
\end{verbatim}
}
\end{proof}

\begin{lemma}  \label{cfsdf}
Let $F$ be general. Then there are only finitely many singular
curves $C \subset F$ of class $\beta$ and all of them are nodal.
\end{lemma}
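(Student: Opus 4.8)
The plan is to reduce the statement to two assertions: first, that only finitely many curves $C\subset F$ of class $\beta$ are singular; second, that each such singular curve is nodal. For the finiteness part, recall that $\Chow_\beta(F)=S\cup\Sigma$ where $S$ (the locus of rational normal curves) is a smooth surface, and $\Sigma$ is a smooth curve by the results of Section~\ref{Subsection_Parametrizing_Cubic_cones}. A singular curve of class $\beta$ is either a degenerate member of $S$ (a rational curve of degree $3$ spanning a $\p^3$ that has acquired a singularity) or an arithmetic-genus-$1$ curve from $\Sigma$ that is singular, i.e.\ is itself reducible/nodal. Thus I would split into the two families and bound each.

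For the family $\Sigma$: since $\Sigma$ is smooth of dimension $1$ and maps to the coarse space of $\Mbar_{1,1}$ via the $j$-invariant, a curve $[C]\in\Sigma$ is singular precisely when it is a nodal cubic or reducible (three lines, etc.), which corresponds to $[C]$ lying over the boundary point $j=\infty$, or more precisely over the discriminant divisor $D\subset\p(\CE)$ of singular plane cubics. By Lemma~\ref{lemma_3780} we have $D\cdot\Sigma=3780<\infty$, so only finitely many $C\in\Sigma$ are singular, and since $\Sigma$ avoids the locus of cones with more than one vertex, each such $C$ is a cone over a cubic curve with a unique node (a nodal plane cubic, or a conic-plus-line / triangle configuration); one checks from the local description that these give nodal curves in $F$ — in fact, one must check the general cubic fourfold has no cuspidal members, which is where genericity of $Y$ enters. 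For the family $S$: I would use that $S$ is isomorphic to the surface of lines of second type, and appeal to \cite{A, OSY} together with a dimension count — the locus of singular members of a rational-normal-curve family in a general ambient is at most $1$-dimensional in $S$, but in fact such singular rational curves are already accounted for as nodal members lying in $S\cap\Sigma$, which is finite (at most $3780$ points) by the discussion following Theorem~\ref{MainThm}. So there is nothing new to prove in the $S$ part beyond invoking that intersection.

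The cleanest route is therefore: (1) record $\Chow_\beta(F)=S\cup\Sigma$ with $S$ a smooth surface of rational normal curves and $\Sigma$ a smooth curve; a curve of class $\beta$ is singular iff it is not a smooth rational normal curve and not a smooth elliptic curve, i.e.\ iff it lies in the (finite) set $S\cap\Sigma$ together with the singular members of $\Sigma$; (2) the singular members of $\Sigma$ form the finite set $D\cap\Sigma$, of cardinality $3780$ by Lemma~\ref{lemma_3780}, which also contains $S\cap\Sigma$; hence the singular curves of class $\beta$ are exactly the $\le 3780$ points of $D\cap\Sigma$, in particular finite in number; (3) for nodality, examine each such $C$: it is a cone over a singular plane cubic with a single vertex, hence its singularities are cones over the singular points of a plane cubic that is either nodal or a degenerate union of lines, and in all these cases $C$ is nodal — ruling out cuspidal cubics requires $Y$ general, which one sees by the same incidence-variety/genericity argument as in Lemma~\ref{Lemma32} (the cuspidal locus does not dominate the space of cubic fourfolds).

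The main obstacle is step (3), specifically ruling out cuspidal (or worse) plane-cubic cones for the general $Y$: one needs a transversality/genericity argument analogous to Lemma~\ref{Lemma32}, showing that the incidence variety of pairs $(Y, C)$ with $C$ a cuspidal cone of class $\beta$ in $F_Y$ does not dominate the moduli of cubic fourfolds. I would handle this by the same parametrized approach used for $\CM_Y$: cuspidal plane cubics form a codimension-$2$ locus inside the $\p^9$ of plane cubics (versus codimension $1$ for nodal), so imposing "$V(f)\cap\p^3$ is a cuspidal cone" cuts the $56$-dimensional incidence variety down below $55$ dimensions, hence the projection to $\p^{55}$ is not dominant. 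Combined with the node count from Lemma~\ref{lemma_3780}, this finishes the lemma.
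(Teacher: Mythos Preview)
Your proposal reaches the right conclusions by essentially the same mechanism as the paper (finiteness via the intersection $D\cdot\Sigma$ or an equivalent dimension count, and exclusion of cusps via a codimension/incidence-variety argument), but you overcomplicate the setup in two places.

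First, there is no need to analyse $S$ separately or to worry about ``degenerate members of $S$'': a degree-$3$ curve in $\p^{14}$ that spans a $\p^3$ is a rational normal curve and hence automatically smooth. Thus any \emph{singular} curve of class $\beta$ has arithmetic genus $\geq 1$ and already lies in $\Sigma$. The paper opens with exactly this observation, which collapses your case analysis.

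Second, you need not consider reducible or triangle configurations: since $\beta$ is irreducible (we are in the general case, so $Y$ contains no plane), every curve of class $\beta$ is irreducible, and an irreducible singular plane cubic is either nodal or cuspidal. So the only thing to rule out is the cuspidal case, which you handle correctly by the codimension-$2$ incidence argument; this is precisely the paper's ``parallel argument''. For finiteness the paper phrases it as a dimension count on the incidence variety $\Omega\subset\p^{55}\times\mathrm{Gr}(4,6)$ (dimension $55$, dominant by Lemma~\ref{lemma_3780}, hence generically finite fibers), which is equivalent to your use of $D\cdot\Sigma=3780$.
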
 
\begin{proof}
Every singular curve in $F$ of class $\beta$ is of arithmetic genus $\geq 1$ and hence
by the discussion in Section~\ref{Subsection_intro_fano_var}
a cone over a plane cubic. In particular, the curve is rational and has either one node or one cusp.
By a dimension count as in Lemma~\ref{Lemma32} the locus
\[ \Omega = \{ (Y, P) \in \p^{55} \times \mathrm{Gr}(4,6) | Y \text{ smooth}, P \cap Y \text{ cone over a singular cubic} \} \]
is of dimension $55$. By Lemma~\ref{lemma_3780} the map $\Omega \to \p^{55}$ is dominant
so the fibers are generically finite. The argument for cuspidal curves is parallel.
\end{proof}

\begin{lemma} \label{deg_lemma}
The degree of $\Sigma \to \p^1$ sending a curve $C$ to its $j$-invariant is of degree $3780$.
\end{lemma}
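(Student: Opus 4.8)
The plan is to identify the map $\Sigma \to \p^1$ with the $j$-invariant map on the family of plane cubics parametrized by $\Sigma \subset \p(\CE)$, and then compute its degree by counting preimages of a boundary point. First I would recall that a point of $\Sigma$ is a cone $Z \subset \p^3$ over a plane cubic $B_y \subset \p(\tilde Q)$; the elliptic curve $C \in \Sigma$ is (the normalization of) this plane cubic, so its $j$-invariant is literally the $j$-invariant of the cubic curve in $\p(\tilde Q) \cong \p^2$. Thus the map $\Sigma \to \p^1$ extends (or rather, is computed via) the $j$-map on the universal family of plane cubics over $\Sigma$. To compute the degree it suffices, since $\Sigma$ is a smooth projective curve and $\p^1$ is a smooth curve, to take any point of $\p^1$ and count its preimages with multiplicity; the natural choice is the cusp of the coarse $\Mbar_{1,1}$, i.e.\ the locus of singular (nodal or cuspidal) plane cubics.

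The key steps, in order: (i) observe that the $j$-map $\Sigma \to \p^1$ is non-constant — this follows because by Lemma~\ref{Lemma32} and Corollary~\ref{Cor_abc} the general $C \in \Sigma$ is a smooth plane cubic, and a one-parameter family of smooth plane cubics cannot have constant $j$ unless it is isotrivial, which is excluded by the fact that $\Sigma$ meets the discriminant divisor (Lemma~\ref{lemma_3780} gives a nonzero intersection number). (ii) Identify the fiber of $\Sigma \to \p^1$ over the boundary point $\infty \in \p^1$ (the point representing a nodal cubic) with the scheme-theoretic intersection $\Sigma \cap D$, where $D \subset \p(\CE)$ is the divisor of singular plane cubics of Lemma~\ref{lemma_3780}. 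Here one uses that over a general point near $\infty$ the cubic is smooth, so no multiplicity is lost, and that the cuspidal cubics form a codimension-$2$ sublocus of $D$ which by Lemma~\ref{cfsdf} (or the dimension count therein) does not meet the general $\Sigma$; hence all $3780$ points of $\Sigma \cap D$ are nodal cubics. (iii) Conclude $\deg(\Sigma \to \p^1) = \# (\Sigma \cap D) = 3780$ by Lemma~\ref{lemma_3780}, once one checks the intersection is transverse — equivalently, that $\Sigma \to \p^1$ is unramified over $\infty$, which holds because a general one-parameter family acquiring a single node has the $j$-map ramified to order exactly $1$ at the cusp of $\Mbar_{1,1}$ only if the family is not versal at that node; versality follows from the deformation-theoretic genericity of $F$, e.g.\ from Proposition~\ref{Prop_Non_sing} and smoothness of $\Sigma$.

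The main obstacle I anticipate is step (iii): ruling out the possibility that $\Sigma \to \p^1$ is everywhere ramified over $\infty$, or more precisely matching the naive count $D \cdot \Sigma$ with the degree of the $j$-map on the nose. The cleanest way around this is to avoid transversality entirely: since both $\Sigma \to \p^1$ and the classifying map $\Sigma \to \Mbar_{1,1}$ (on an étale cover) agree with the $j$-map of the universal plane cubic, the pullback of the boundary divisor point $[\infty] \in \p^1$ equals the pullback of the discriminant, which is exactly the Cartier divisor cut out on $\Sigma$ by $D$; therefore $\deg(\Sigma \to \p^1) = \deg(D|_\Sigma) = D \cdot \Sigma = 3780$ regardless of transversality, provided one knows the discriminant on the universal Weierstrass/plane-cubic family maps to $[\infty]$ with multiplicity one, which is the standard fact that $\Delta$ vanishes to order $1$ along the nodal locus in $M_{1,1}$-families. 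This reduces the whole lemma to Lemma~\ref{lemma_3780} together with this universal multiplicity-one statement, which I would cite rather than reprove.
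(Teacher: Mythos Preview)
Your proposal is correct and ultimately lands on the same argument as the paper: since $\Sigma$ avoids cuspidal cubics (Lemma~\ref{cfsdf}) and is smooth, the restriction $D|_{\Sigma}$ is precisely the pullback of the point $[\infty]\in\p^1$ under the $j$-map, so the degree equals $D\cdot\Sigma=3780$ by Lemma~\ref{lemma_3780}. The paper's proof states this identification directly in one sentence; your steps (i)--(iii) and the transversality discussion are unnecessary detours that you yourself supersede in the final paragraph.
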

\begin{proof}
Let $D \subset \p(\CE)$ be the divisor parametrizing singular cubics.
Since $\Sigma$ does not parametrize any cuspidal cubics and is non-singular, the restriction of $D$ to $\Sigma$ is the pullback of
the class of a point from the natural map $\Sigma \to \p^1$, where $\p^1$ is the coarse space of $\Mbar_{1,1}$.
Hence it is enough to compute the intersection pairing of $D$ with $\Sigma$, which we have done in Lemma~\ref{lemma_3780}.
\end{proof}

\begin{proof}[Proof of Theorem~\ref{MainThm}]
As we have discussed above, the curve $\Sigma$ is Cohen-Macaulay and hence smooth by Corollary~\ref{Cor_abc}.
The vertex-assignment $\Sigma \to \CM_Y$ is an \'etale cover of degree $2$. The associated covering involution on $\Sigma$ has the connected quotient $\CM_Y$.
By Lemma~\ref{lemma_3780} all rational curves parametrized by $\Sigma$ are nodal, and finally the genus and the degree to the $j$-line are computed in Lemmata~\ref{Lemma_genus} and~\ref{deg_lemma} respectively.
\end{proof}

\subsection{Moduli of stable maps} \label{Subsection_moduli_of_stable_maps}
Let $F$ be a general Fano of lines and let
\[ f : E \to F \]
be a map from a non-singular smooth elliptic curve in class $\beta$.
The following together with Lemma~\ref{cfsdf} implies Proposition~\ref{Prop_Non_sing}.

\begin{prop}  \label{Prop_normalbundle}Let $N_{E/F} = f^{\ast} T_F / T_E$ be the normal bundle to $f$. Then 
\[ h^0(E, N_{E/F}) = 1. \] 
\end{prop}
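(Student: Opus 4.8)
The plan is to compute the normal bundle $N_{E/F}$ explicitly using the geometry developed above: the elliptic curve $E\subset F$ is the base of a cubic cone $Z_y = \p^3\cap Y$ with vertex $y$, and $E$ sits inside a plane $\p^2\subset \mathrm{Gr}(2,6)\subset\p^{14}$ corresponding to the lines through $y$. So I would first identify $\p^2$ with the set of lines through $y$, which is naturally $\p(T_y\p^5/\langle\text{directions}\rangle)$, or more concretely the exceptional $\p^4$ of the blow-up of $\p^5$ at $y$ restricted suitably; then $E\subset\p^2$ is the planar cubic cut out by $f'|_{X_1=0}$ (in the notation of the proof of Lemma~\ref{Lemma32}) — more precisely, $E = B_y$ is one component of the $(2,3)$ complete intersection curve, here a plane cubic. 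The key point is to relate $N_{E/F}$ to the normal bundle of $E$ inside $\p^2$ and inside $F$, using that $F\subset\mathrm{Gr}(2,6)$ and $\p^2\subset\mathrm{Gr}(2,6)$.

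The main computation: I would produce an exact sequence relating $T_F$ along $E$ to the tangent bundle of the ambient projective space and the geometry of the cone. One natural route is the curve–surface correspondence: deformations of $E$ in $F$ correspond to deformations of the cone surface $S_E\subset Y$ (equivalently of the pair $(y,\p^3)$ plus the degeneration of the quadric $R_yY\cap T_yY$ into two planes), and these are governed by the section $\hat s_f$ of the rank $19$ bundle $\CF$ on $\p(\CE)$ whose zero locus is $\Sigma$. Since $\Sigma$ is smooth of dimension $1$ and the open locus of smooth elliptic curves in $\Sigma$ is isomorphic (as we want to show) to $M_{1,0}(F,\beta)$, the tangent space to $\Sigma$ at $[E]$ is the fiber of $\ker(d\hat s_f)$, which is $1$-dimensional. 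So the cleanest argument is: $T_{[E]}\Sigma$ is $1$-dimensional by smoothness and the dimension count, and I need $h^0(E,N_{E/F}) = \dim T_{[E]}M_{1,0}(F,\beta)$, so I must show the deformation space of the map $f:E\to F$ coincides with (or injects into) the deformation space of the abstract curve $E\subset F$, i.e. that $H^0(E,N_{E/F})$ surjects onto first-order deformations of the subscheme and the kernel (deformations fixing the image, i.e. reparametrizations) vanishes because $E$ has no infinitesimal automorphisms fixing no points... actually $E$ elliptic does have automorphisms, so I must be careful: $M_{1,0}$ has no marked points, so $\mathrm{Aut}(E)$ acts, but deformations of $f$ as a stable map already quotient by this, giving $H^0(E,N_{E/F})$ directly as the tangent space to $M_{1,0}(F,\beta)$ at $[f]$ (since $f$ is an embedding, $H^0(E,T_E)$ injects into $H^0(E,f^*T_F)$ and the quotient is $H^0(E,N_{E/F})$, which is exactly the first-order deformations of the image curve when $H^1(E,T_E)$ maps appropriately — here I should use the standard identification for embeddings $\mathrm{Def}(f) = H^0(N_{E/F})$).

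Concretely I would therefore argue: (1) $E\subset F$ is a local complete intersection curve (a smooth plane cubic), so its Hilbert scheme of subschemes has tangent space $H^0(E,N_{E/F})$; (2) since $f$ is a closed immersion, first-order deformations of the stable map $f$ (modulo the automorphisms of the domain already built into $\Mbar_{1,0}$) agree with first-order deformations of the subscheme $E\subset F$, so $T_{[f]}M_{1,0}(F,\beta) = H^0(E,N_{E/F})$; (3) the open locus $M_{1,0}^{\mathrm{sm}}\subset\Sigma$ of smooth elliptic curves is identified with an open subscheme of $\Sigma$ via $[f]\mapsto[f(E)]\in\Chow_\beta(F)$ — indeed any deformation of the image in $F$ through class $\beta$ is again a genus-$1$ curve of class $\beta$ hence a point of $\Sigma$; (4) $\Sigma$ is smooth of dimension $1$ by Section~\ref{Subsection_Parametrizing_Cubic_cones}, so $H^0(E,N_{E/F}) = T_{[E]}\Sigma$ is exactly $1$-dimensional. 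The deformation-obstruction sequence $0\to H^0(T_E)\to H^0(f^*T_F)\to H^0(N_{E/F})\to H^1(T_E)\to\cdots$ shows $h^0(N_{E/F})\geq \mathrm{vd} = 1$ always, so only the upper bound needs proof, which is what smoothness of $\Sigma$ gives.

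The main obstacle I anticipate is step (3)/(4): making rigorous that the natural map $M_{1,0}(F,\beta)^{\mathrm{sm}}\to\Sigma$ is not just a bijection on points but an isomorphism of schemes near $[f]$ (equivalently, that the tangent spaces match), since $\Sigma$ is defined as a \emph{reduced} Chow-variety locus and one must rule out that $\Sigma$ carries a smaller tangent space than the Hilbert-scheme deformation space of $E\subset F$. Here I would use that $E$ is a planar cubic in the fixed $\p^2=\p(\tilde Q_{y})$: its only embedded deformations in $F$ come from (a) moving $y$ along $\CM_Y$ — a $1$-dimensional family by Corollary~\ref{Cor_abc} — and (b) there are no deformations of $E$ inside the fixed plane staying in $F$, because that plane meets $F$ exactly in the cone base $B_y = E\cup E'$ (or the $(2,3)$-curve), which is rigid. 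Thus $\dim T_{[E]}\Sigma \le 1$. Combined with the lower bound $\ge 1$ from the virtual dimension, and with the scheme-theoretic smoothness of $\Sigma$ proved via the regular section $\hat s_f$, we conclude $h^0(E,N_{E/F}) = 1$, which also yields the isomorphism claim in Proposition~\ref{Prop_Non_sing}.
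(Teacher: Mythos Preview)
Your approach is genuinely different from the paper's. The paper computes $N_{E/F}$ directly: it realizes $E$ inside the exceptional divisor $D\cong\p^3$ of the blow-up $\tilde Y\to Y$ at the vertex $v$, pushes forward a tangent-bundle sequence along the incidence correspondence $I_E\to E$, and obtains an explicit exact sequence
\[
0 \to N_{E/F} \to T_D(-1)|_E \to \bigoplus_{\ell\in E\cap E'}\BC_\ell \to 0.
\]
It then checks by hand (in coordinates, for general $Y$) that the induced map $H^0(T_D(-1)|_E)\cong\BC^4 \to \BC^3$ is surjective, whence $h^0(N_{E/F})=1$. No use is made of the smoothness of $\Sigma$; in fact Proposition~\ref{Prop_normalbundle} is what the paper uses to \emph{deduce} the isomorphism $M_{1,0}(F,\beta)\cong\Sigma^{\mathrm{sm}}$ in Proposition~\ref{Prop_Non_sing}.

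Your proposal reverses this logic, and the gap lies exactly where you suspected. Knowing that $\Sigma$ (with its reduced, or equivalently zero-locus, structure in $\p(\CE)$) is smooth of dimension $1$ does not by itself bound $h^0(E,N_{E/F})=\dim T_{[E]}\Hilb$: you must produce a morphism of schemes $\Hilb\to\Sigma$ and show it is injective on tangent spaces. Your argument (a)--(b) does not do this. Step (b) is fine (the kernel of ``deformations of $E$ in $F$'' $\to$ ``deformations of $y$'' vanishes, since $E$ is rigid in $B_y$), but step (a) is circular: you assert that the induced first-order deformation of $y$ lies in $T_y\CM_Y$, i.e.\ that the polar quadric stays reducible \emph{to first order}. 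That is precisely the infinitesimal statement at stake, and ``any deformation of the image is again a genus-$1$ curve of class $\beta$ hence a point of $\Sigma$'' is a statement about closed points, not about $k[\epsilon]$-points.

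The route is salvageable, but it requires real work you have not written: one must show, for an arbitrary (possibly non-reduced) base $B$ and a flat family $E_B\subset F\times B$ with fibres of class $\beta$ and $\chi=0$, that the span of $E_B$ in $\p^{14}\times B$ is a $\p^2$-bundle (a cohomology-and-base-change argument), hence gives a $B$-point of $\p(\CK)$; that the cone over $E_B$ coincides scheme-theoretically with $Y\cap P_B$ (both being cubics in the $\p^3$-bundle $P_B$, one contained in the other); and therefore that the classifying map lands in $\Sigma=V(\hat s_f)$. Only then does $\dim T_{[E]}\Hilb\le\dim T_{[E]}\Sigma=1$ follow. This is comparable in length to the paper's direct computation, and the latter has the advantage of producing the normal bundle explicitly.
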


As explained in the introduction every map $f : E \to F$ of class $\beta$ from a curve of arithmetic genus $1$
is a closed immersion. Moreover it is given by a family of lines $\ell \subset Y$ through a fixed vertex $v \in Y$.
The idea of the proof is to compare arbitrary deformations of $\ell$ with those which remain incident to $v$.
This is facilitated by the following lemma.

\begin{lemma}
Let $X$ be a smooth variety, let $g : \tilde{X} \to X$ be the blow-up at a point $v \in X$
and let $\iota : D = \p(T_{X,x}) \hookrightarrow \tilde{X}$ be the exceptional divisor.
We have an exact sequence
\[ 0 \to T_{\tilde{X}} \to g^{\ast} T_X \to \iota_{\ast} T_D(-1) \to 0. \]
\end{lemma}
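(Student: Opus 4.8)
The plan is to work directly with the differential $dg \colon T_{\tilde{X}} \to g^{\ast} T_X$. Since $g$ is an isomorphism over $X \setminus \{v\}$ and both sheaves are locally free of rank $n = \dim X$, the map $dg$ is an isomorphism away from $D$; its kernel is therefore a torsion subsheaf of the locally free sheaf $T_{\tilde{X}}$, hence zero. So $dg$ is injective, and the task reduces to identifying its cokernel $\mathcal{Q}$, a coherent sheaf supported set-theoretically on $D$.

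First I would compute $\mathcal{Q}$ in a standard affine chart. Choose local coordinates $x_1, \dots, x_n$ on $X$ centered at $v$, and the chart of $\tilde{X}$ with coordinates $u, t_2, \dots, t_n$ satisfying $x_1 = u$ and $x_i = u t_i$ for $i \geq 2$, so that $D = \{ u = 0 \}$. A direct computation of the pullbacks of functions gives
\[ dg(\partial_u) = g^{\ast}\partial_{x_1} + \sum_{i \geq 2} t_i\, g^{\ast}\partial_{x_i}, \qquad dg(\partial_{t_j}) = u\, g^{\ast}\partial_{x_j} \quad (j \geq 2). \]
Hence in $\mathcal{Q}$ every section $u \cdot g^{\ast}\partial_{x_i}$ vanishes (for $i \geq 2$ directly, for $i = 1$ via the first relation), so $\mathcal{Q}$ is annihilated by the local equation of $D$; that is, $\mathcal{Q} = \iota_{\ast}\mathcal{Q}'$ for a sheaf $\mathcal{Q}'$ on $D$, which by the same computation is locally free of rank $n-1$, freely generated by the classes of $g^{\ast}\partial_{x_2}, \dots, g^{\ast}\partial_{x_n}$.

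It then remains to identify $\mathcal{Q}'$ intrinsically. Restricting the surjection $g^{\ast}T_X \twoheadrightarrow \iota_{\ast}\mathcal{Q}'$ to $D$ yields a surjection of vector bundles $T_{X,v} \otimes \CO_D = g^{\ast}T_X|_D \twoheadrightarrow \mathcal{Q}'$, the left-hand side being the constant bundle with fiber $T_{X,v}$ because $g(D) = \{v\}$. Its kernel is a line subbundle $L \subset T_{X,v} \otimes \CO_D$, and from the chart above its fiber over the point $[\ell] \in D = \p(T_{X,v})$ with $\ell$ spanned by $\partial_{x_1} + \sum_{i \geq 2} t_i \partial_{x_i}$ is exactly the line $\ell$. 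Since line subbundles of $T_{X,v}\otimes\CO_D$ correspond to morphisms $D \to \p(T_{X,v})$, the bundle $L$ with this property is the tautological subbundle $\CO_D(-1)$, and therefore $\mathcal{Q}' \cong (T_{X,v}\otimes\CO_D)/\CO_D(-1) \cong T_D(-1)$ by the Euler sequence on $D = \p(T_{X,v})$. Since the exceptional divisor has normal bundle $N_{D/\tilde{X}} = \CO_D(-1)$, the shorthand $T_D(-1) = T_D \otimes N_{D/\tilde{X}}$ agrees with the expression in the statement, and putting everything together gives the exact sequence $0 \to T_{\tilde{X}} \to g^{\ast} T_X \to \iota_{\ast} T_D(-1) \to 0$.

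The computation has no genuine difficulty; the only thing requiring care is the bookkeeping of conventions — which model of $\p(-)$ is used, and hence the sign of the twist in the Euler sequence and the identification $N_{D/\tilde{X}} = \CO_D(-1)$ — so that all twists line up correctly. One could also avoid coordinates in the last step by noting that $dg$ annihilates $T_{[\ell]}D$, so $\operatorname{rank}(dg|_{[\ell]}) \leq 1$ with image $\ell$ (by considering a curve in $\tilde{X}$ meeting $D$ transversally at $[\ell]$), giving $\mathcal{Q}'\otimes k([\ell]) = T_{X,v}/\ell$ and the same conclusion — though the scheme-theoretic support of $\mathcal{Q}$ on $D$ still needs the explicit local model.
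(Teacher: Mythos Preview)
Your argument is correct and complete: the injectivity of $dg$, the local computation of the cokernel, its scheme-theoretic support on $D$, and the identification via the Euler sequence all check out.

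Your route differs genuinely from the paper's. The paper proceeds abstractly: it starts from the relative cotangent sequence $0 \to g^{\ast}\Omega_X \to \Omega_{\tilde{X}} \to \Omega_{\tilde{X}/X} \to 0$, dualizes to obtain $0 \to T_{\tilde{X}} \to g^{\ast}T_X \to \ext^1(\Omega_{\tilde{X}/X},\CO_{\tilde{X}}) \to 0$, identifies $\Omega_{\tilde{X}/X} = \iota_{\ast}\Omega_D$, and then computes the Ext sheaf via Grothendieck--Verdier duality together with the formula $\omega_{\tilde{X}} = g^{\ast}\omega_X((n-1)D)$. Your approach is more elementary and entirely self-contained --- explicit coordinates plus the Euler sequence --- and has the advantage that it makes the map $g^{\ast}T_X \to \iota_{\ast}T_D(-1)$ concrete, which is useful if one later needs to compute with it. The paper's approach, on the other hand, is coordinate-free and would adapt more readily to blow-ups along higher-dimensional centers, where the explicit chart calculation becomes heavier.
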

\begin{proof}
Since $X$ is smooth and $g$ is birational we have an exact sequence
\[ 0 \to g^{\ast} \Omega_{X} \to \Omega_{\tilde{X}} \to \Omega_{\tilde{X}/X} \to 0. \]
Dualizing we get
\[ 0 \to T_{\tilde{X}} \to g^{\ast} T_X \to \ext^1_{\CO_X}( \Omega_{\tilde{X}/X}, \CO_{\tilde{X}} ) \to 0. \]
By a direct check $\Omega_{\tilde{X}/X} = \iota_{\ast} \Omega_D$ and by Grothendieck-Verdier duality
\[ \ext^1_{\CO_X}( \iota_{\ast} \Omega_D, \CO_{\tilde{X}} ) = \iota_{\ast} \hom_{\CO_D}( \Omega_D, \omega_D \otimes \omega_{\tilde{X}}^{-1}|_{D} ). \]
Using $\omega_{\tilde{X}} = g^{\ast} \omega_X((n - 1)D)$, $n = \dim X$ yields the claim.
\end{proof}

Let $I_E = \{ (\ell, y) \in E \times Y | y \in \ell\}$ be the incidence correspondence corresponding to $E$.
We write
\[ \pi : I_E \to E \]
for the projection to the Fano side. Via the projection to the second factor
we may view $I_E$ as the blow-up at the vertex of the cone $Z$ of lines parametrized by $E$.
Hence naturally $I_E \subset \tilde{Y}$, where $g : \tilde{Y} \to Y$ is the blow-up of $Y$ at $v$.
We consider the sequence
\[ 0 \to T_{\tilde{Y}} \to g^{\ast} T_Y \to T_{D}(-1) \to 0. \]
Restricting to $I_E$ and quotienting out by $T_{\pi} \subset T_{\tilde{Y}}|_{I_E}$ yields
\begin{equation} 0 \to \tilde{\CM} \to \CM \to T_D(-1) \to 0 \label{hhhh} \end{equation}
where
\[ \tilde{\CM} = \left( T_{\tilde{Y}}|_{I_E} \right) / T_{\pi}, \quad \CM = \left( g^{\ast} T_Y |_{I_E} \right) / T_{\pi}. \]

We want to compute the pushforward by $\pi_{\ast}$ of the sequence \eqref{hhhh}.
By construction $\pi_{\ast} \CM = T_{F|E}$. Since
\[ (R^1 \pi_{\ast} \CM) \otimes k(\ell) = H^1(\ell, N_{\ell/Y} ) = 0 \]
for every $\ell \in E$ we have $R^1 \pi_{\ast} \CM = 0$.
On the other hand,
\[ \pi_{\ast} \tilde{\CM} = T_{F(\tilde{Y})}|_{E} \]
where $F(\tilde{Y})$ is the Fano variety of lines in $\tilde{Y}$.
The Fano $F(\tilde{Y})$ is cut out from the Fano variety $F(\tilde{\p^5}) = \p(T_{\p^5,v}) = \p^4$
by the tangent space to $Y$, the polar quadric and $Y$, i.e.
\[ F(\tilde{Y}) = \p^4 \cap T_{v}(Y) \cap R_v(Y) \cap Y = E \cup E' \]
where $E'$ is the partner of $E$.
Hence we find $\pi_{\ast} \tilde{\CM} = T_E$.
On the other hand
\[ (R^1 \pi_{\ast} \tilde{\CM}) \otimes k(\ell) = H^1(\ell, N_{\ell/Y}(-1) ) =
\begin{cases}
\BC& \text{ if } \ell \text{ is of second type } \\
0 & \text{ otherwise }.
\end{cases}
\]
Since the three points $E \cap E'$ are precisely the lines of second type we get
\[ R^1 \pi_{\ast} \tilde{\CM} = \bigoplus_{\ell \in E \cap E'} \BC_{\ell} \]
where $\BC_{\ell}$ is the skyscraper sheaf at $\ell$. (This may be seen also directly by cohomology and base change:
The fiber $H^0(\ell, N_{\ell/F}(-1)) = T_{E \cup E', \ell}$ is $2$-dimensional precisely at the intersection points $E \cap E'$).
Therefore pushing forward \eqref{hhhh} by $\pi$ yields the exact sequence
\[ 0 \to T_E \xrightarrow{\varphi} T_{F}|E \to T_{D}(-1)|_{E} \to \oplus_{\ell \in E \cap E'} \BC_{\ell} \to 0. \]
The first map $\varphi$ is precisely the differential of $f : E \to F$, so its cokernel is the normal bundle. We hence obtain
\[ 0 \to N_{E/F} \to T_{D}(-1)|_{E} \to \bigoplus_{\ell \in E \cap E'} \BC_{\ell} \to 0. \]

To understand the global sections of this sequence we need the following lemma.
\begin{lemma} \label{global_gen} Let $E \subset \p^3$ be an elliptic curve contained in a $\p^2$. Then
\[ H^0(E, T_{\p^3}(-1)|_E) = \BC^4, \quad H^1( E, T_{\p^3}(-1)|_E ) = \BC. \]
\end{lemma}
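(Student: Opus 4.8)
The plan is to reduce everything to standard Euler-sequence bookkeeping on $\p^2$ and $\p^3$. First I would choose homogeneous coordinates $[x_0:x_1:x_2:x_3]$ on $\p^3$ so that the plane containing $E$ is $\p^2 = V(x_3)$, and restrict the twisted Euler sequence
\[ 0 \to \CO_{\p^3}(-1) \xrightarrow{(x_0,x_1,x_2,x_3)} \CO_{\p^3}^{\oplus 4} \to T_{\p^3}(-1) \to 0 \]
to $E$; this stays exact since $T_{\p^3}(-1)$ is locally free. Because $x_3$ vanishes on $E$, the image of $\CO_E(-1)$ lands in the first three summands of $\CO_E^{\oplus 4}$, where it coincides with the image of the restriction to $E$ of the Euler sequence of $\p^2 = V(x_3)$. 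This should yield the direct-sum decomposition
\[ T_{\p^3}(-1)|_E \;\cong\; \bigl(\CO_E^{\oplus 3}/\CO_E(-1)\bigr) \oplus \CO_E \;\cong\; T_{\p^2}(-1)|_E \oplus \CO_E . \]
Since $h^0(\CO_E) = h^1(\CO_E) = 1$, the lemma then reduces to the two statements $h^0(E,T_{\p^2}(-1)|_E) = 3$ and $h^1(E,T_{\p^2}(-1)|_E) = 0$.

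For the $\p^2$ statement I would first observe that $\deg(T_{\p^2}(-1)|_E) = c_1(T_{\p^2}(-1)) \cdot [E] = H \cdot 3H = 3$, so Riemann--Roch on the genus-one curve $E$ gives $\chi(T_{\p^2}(-1)|_E) = 3$; hence it suffices to prove the single vanishing $h^1(E, T_{\p^2}(-1)|_E) = 0$. Twisting the structure sequence of $E \subset \p^2$ (with $\CO_{\p^2}(E) = \CO_{\p^2}(3)$) by $T_{\p^2}(-1)$ gives
\[ 0 \to T_{\p^2}(-4) \to T_{\p^2}(-1) \to T_{\p^2}(-1)|_E \to 0, \]
so $H^1(T_{\p^2}(-1)|_E)$ is squeezed between $H^1(T_{\p^2}(-1))$ and $H^2(T_{\p^2}(-4))$. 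I would then kill both: $H^1(T_{\p^2}(-1)) = 0$ follows from the Euler sequence $0 \to \CO_{\p^2}(-1) \to \CO_{\p^2}^{\oplus 3} \to T_{\p^2}(-1) \to 0$ together with $H^1(\CO_{\p^2}^{\oplus 3}) = H^2(\CO_{\p^2}(-1)) = 0$; and $H^2(T_{\p^2}(-4)) \cong H^0(\Omega^1_{\p^2}(1))^\vee = 0$ by Serre duality and the dual Euler sequence $0 \to \Omega^1_{\p^2}(1) \to \CO_{\p^2}^{\oplus 3} \to \CO_{\p^2}(1) \to 0$, whose map on global sections is the isomorphism $\BC^3 \xrightarrow{\sim} H^0(\CO_{\p^2}(1))$.

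I do not expect a serious obstacle: the computations are routine. The one point needing care is the very first step, namely checking that the restricted inclusion $\CO_E(-1) \hookrightarrow \CO_E^{\oplus 3}$ is genuinely the restriction of $\CO_{\p^2}(-1) \hookrightarrow \CO_{\p^2}^{\oplus 3}$ and has cokernel $T_{\p^2}(-1)|_E$ — this holds because $T_{\p^2}(-1)$ is locally free, so the restricted map stays injective with the expected locally free cokernel. If one prefers to avoid the direct-sum shortcut, the same conclusion is reached by chasing the long exact sequences of the two Euler sequences (on $\p^3$ and on $\p^2$) restricted to $E$ directly, using the same cohomology vanishings.
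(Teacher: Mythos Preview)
Your argument is correct, but it takes a different route from the paper. The paper works directly with the long exact sequence of the restricted Euler sequence $0 \to \CO_E(-1) \to \CO_E^{4} \to T_{\p^3}(-1)|_E \to 0$ and shows that the connecting map $H^1(\CO_E(-1)) \to H^1(\CO_E^{4})$ is injective by Serre duality on $E$: its dual is the restriction $H^0(\CO_E^{4}) \to H^0(\CO_E(1))$, whose kernel is the $1$-dimensional space of hyperplanes containing $E$, hence the map is surjective. From this the two cohomology dimensions drop out immediately.

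Your approach instead exploits the containment $E \subset \p^2 \subset \p^3$ to split $T_{\p^3}(-1)|_E \cong T_{\p^2}(-1)|_E \oplus \CO_E$, and then lifts the remaining computation to $\p^2$ via the ideal-sheaf sequence and standard vanishing theorems there. This has the pleasant feature of making the origin of the one-dimensional $H^1$ transparent (it comes entirely from the trivial summand $\CO_E$), and it needs only Bott-type vanishing on projective space plus Riemann--Roch. The paper's argument is shorter and stays on the curve, and its Serre-duality step identifies the connecting map geometrically; either method is perfectly adequate for this routine lemma.
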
 
\begin{proof}
Twisting the Euler sequence and restricting to $E$ we get
\[ 0 \to \CO_{E}(-1) \to \CO_E^4 \to T_{\p^3}(-1)|_E \to 0. \]
Taking cohomology the induced map $\alpha: H^1(E, \CO_E(-1)) \to H^1(E, \CO_E^4)$ is Serre dual to
\[ H^0( E, \CO_E^4) \to H^0(E, \CO_E(1)). \]
This sequence is obtained from taking the global section of the restriction of $H^0(\p^3, \CO_{\p^3}(1)) \otimes \CO_{\p^3} \to \CO_{\p^3}(1)$ to $E$.
Since its kernel is precisely the space of hyerplane that contain $E$ which is of dimension $1$, the map is surjective.
We conclude that $\alpha$ is injective which gives the claim.
\end{proof}

It remains to show that
\begin{equation} H^0(E, T_D(-1)|_E) = \BC^4 \to \bigoplus_{\ell \in E \cap E'} H^0(E,\BC_{\ell}) = \BC^3 \label{surj_to_show} \end{equation}
is surjective. For that we need to analyse the map. Let $\ell \in E \cap E'$. The composition of \eqref{surj_to_show} with the projection to the $\ell$-th summand factors as
\[ H^0(E, T_D(-1)|_{E}) \to T_D(-1) \otimes k(\ell) \xrightarrow{\rho} k_{\ell} = H^1(\ell, N_{\ell/Y}(-1)). \]
The map $\rho$ is obtained from the long exact cohomology sequence of
\[ 0 \to N_{\ell/Y}(-v) \to N_{\ell/Y} \to N_{\ell/Y, v} \to 0 \]
where we have written $N_{\ell/Y,v} = N_{\ell/Y} \otimes k(v)$ for the fiber at $v \in \ell$.
Consider the decomposition
\[ N_{\ell/Y} = \CO_{\ell}(1)^2 \oplus \CO_{\ell}(-1). \]
We can hence identify $\rho$ with the projection
\[ N_{\ell/Y,v} \to N_{\ell/Y,v} / (\CO_{\ell}(1)_v \oplus \CO_{\ell}(1)_v) = \CO_{\ell}(-1)_v. \]
The normal directions spanned by the $\CO_{\ell}(1)_v$ summands is the space spanned by the tangent spaces $T_{E,\ell}$ and $T_{E',\ell}$.
We hence need to show that the following map is surjective:
\begin{equation} \label{Dsgdgsg}  H^0(E, T_D(-1)|_{E}) \to \bigoplus_{\ell \in E \cap E'} T_D(-1)_{\ell} / \left( T_{E,\ell} \oplus T_{E',\ell} \right). \end{equation}

To do so we pick coordinates. We can take the equation of the cubic fourfold to be
\[ f = x_0^2 x_1 + x_0 x_1 f_2' + x_0 x_2 x_3 + f_3 \]
where $f_2' \in k[x_1, \ldots, x_5]_1$ and $f_3 \in k[x_1, \ldots, x_5]_3$.
Here $v = [1, 0 ,\ldots, 0]$.
We set $x_0 = 1$ and consider the projectivization of the tangent space at $v$,
\[ D = \p(T_{Y,v}) = \p^3_{x_2, x_3, x_4, x_5}. \]
Inside $D$ we have the complete intersection
\[ E \cap E' = \p( V(x_2, x_3, f_3) ) = \{ \ell_1, \ell_2, \ell_3 \}. \]
By change of coordinates we may assume that the $\ell_i$ are distinct from $[0,0,1,0]$ and hence write
$\ell_i = (0,0,a_i,1)$ for some $a_i \in \BC$.
Let $e_2, \ldots, e_5$ be the basis of
$H^0( T_D(-1)|_{E} )$ corresponding to the basis vectors.
The sequence $\CO_E(-1) \to T_D(-1)|_{E}$ is given by $1 \mapsto x_2 e_2 + \ldots + x_5 e_5$.
Hence at $\ell_i$ its image is $a_i e_4 + e_5$.
Write
\[ T_{E, \ell_i} = \mathrm{Span}( e_2 + \alpha_i e_4), \quad 
T_{E', \ell_i} = \mathrm{Span}( e_3 + \beta_i e_4). \]
Then $i$-th factor of the map \eqref{Dsgdgsg} is given by
\[ e_2 \mapsto -\alpha_i e_4, \quad e_3 \mapsto - \beta_i e_4, \quad e_4 \mapsto e_4, \quad e_5 \mapsto -a_i e_4. \]
Hence \eqref{Dsgdgsg} is represented by the matrix
\begin{equation} \label{t435324} 
\begin{pmatrix}
-\alpha_1 & -\beta_1 & 1 & a_1 \\
-\alpha_2 & -\beta_2 & 1 & a_2 \\
-\alpha_3 & -\beta_3 & 1 & a_3 \\
\end{pmatrix}
\end{equation}
We need to check it is surjective. Set $x_5=1$ and let $g = f_3|_{x_1=0, x_5=1}$.
Then
\begin{align*} \alpha_i & = - \frac{ g_{x_4}(0,0,a_i)}{g_{x_2}(0,0,a_i)} = - \frac{ \prod_{j \neq i} (a_j - a_i)}{g_{x_2}(0,0,a_i)}, \\
\alpha_i & = - \frac{ g_{x_4}(0,0,a_i)}{g_{x_3}(0,0,a_i)} = - \frac{\prod_{j \neq i} (a_j - a_i)}{g_{x_3}(0,0,a_i)}.
\end{align*} 
Since $g_{x_2}(0,0,a_i), g_{x_3}(0,0,a_i), g_{x_4}(0,0,a_i)$ involve the monomials $x_2 x_4^{\bullet}$, $x_3 x_4^{\bullet}$ and $x_4^{\bullet}$
respectively, they can be choosen independently from each other. 
On the other hand, the locus of $4 \times 3$-matrices of rank $\leq 2$ is of codimension $2$ in the corresponding space of matrices.
We conclude that the condition that \eqref{t435324} is not surjective is a codimension $2$ condition on the function $f_3$.

The locus of cubic fourfolds with a point $v \in Y$ with $T_y Y \cap R_y Y$ non-integral and \eqref{t435324} not surjective
is therefore of dimension
\[ \underbrace{40}_{\text{Choice of $f_2, f_3$}} + \underbrace{17}_{\text{Choice of } v, T_v Y, T_v \cap R_{v Y}} + \underbrace{- 2}_{\text{non-surjectivity}} 
+ \underbrace{-1}_{\text{overall scaling}} = 54. \]
Since non-singular cubic fourfolds form an open subset in $\p^{55}$,
this locus can not dominate this open subset. 
The proof of Proposition~\ref{Prop_normalbundle} is complete. 

\section{Generalized Kummer fourfolds} \label{Kummer}
In this section we present an example of a very general polarized hyper-K\"ahler fourfold $X$ with primitive curve class $\beta$ such that
$M_{1,0}(X,\beta)$ is empty.

We begin with some generalities. First, if $[f : C \to X] \in M_{1,0}(X,\beta)$ is a stable map, then the image curve
$f(C)$ is of arithmetic genus $\geq 1$. Hence to show $M_{1,0}(X,\beta)$ is empty it is enough to show that the Hilbert scheme 
$\Hilb^0(X,\beta)$ of $1$-dimensional subschemes $Z \subset X$ satisfying the numerical conditions
\[ [Z] = \beta \in H_2(X,\BZ), \quad \chi(\CO_Z) = 0 \]
is empty.
Second, since $\Hilb^0(X,\beta)$ is projective, if for a deformation $\CX \to B$ the Hilbert scheme of the
very general fiber is non-empty, then the Hilbert scheme of the special fiber is non-empty as well.
Hence it is enough to show that for a special pair $(X,\beta)$ the Hilbert scheme $\Hilb^0(X,\beta)$ is empty.\footnote{The
Hilbert scheme also provides a possible pathway to proving Conjecture~\ref{Conj1} for hyperk\"ahler fourfolds. Indeed, since
in this case the expected dimension of the Hilbert scheme
is $1$, to prove Conjecture~\ref{Conj1} it is enough to show it is precisely of dimension $1$ for one special pair $(X,\beta)$,
for example the Hilbert schemes of 2 points of a K3 surface.}
This is done in the following example.\footnote{This example was pointed out to us by H.~Y.~Lin.}

Let $A$ be a simple principally polarized abelian surface of Picard rank $1$
and let $\mathrm{Km}_2(A)$ the associated generalized Kummer fourfold
defined as the fiber over the origin of the summation map
\[ \Hilb^3(A) \to \mathrm{Sym}^3(A) \xrightarrow{+} A. \]
By the universal property of the Hilbert scheme
every map $g : C \to \mathrm{Km}_2(A)$ from a smooth connected curve $C$
corresponds to a curve $\tilde{C} \subset C \times A$ flat of degree $3$ over $C$.
Since $A$ is of Picard rank $1$ the projection of the class of $\tilde{C}$ is a multiple of the theta divisor.
We let $d = d(C)$ be this multiple and call it the degree of $C$.
We also let
\[ k = k(C) = \chi( \CO_{\tilde{C}} ) - 3 (1-g). \]
where $g$ is the genus of $C$.
We define the class of a curve $C \subset \mathrm{Km}_2(A)$ to be the sum of $m_i \cdot (d(C_i), k(C_i)) \in \BZ^2$ where $C_i$ runs over the normalizations of the (reduced) irreducible components of $C$
and $m_i$ is the multiplicity of that component in $C$.
The class of a curve only depends on the homology class of the curve, see \cite[Lem.2]{HilbK3} for an alternative definition.
We write $\Hilb^0(\mathrm{Km}_2(A), (d,k))$ for the Hilbert scheme of curves of class $(d,k)$ and Euler characteristic $0$.

\begin{prop}
Every (possibly reducible, non-reduced) curve in $\mathrm{Km}_2(A)$ of class $(1,-4)$ is isomorphic to $\p^1$.
In particular, 
\[ \Hilb^0(\mathrm{Km}_2(A), (1,-4)) = \varnothing. \]
\end{prop}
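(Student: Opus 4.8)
The plan is to translate the statement about curves in $\mathrm{Km}_2(A)$ into a statement about curves in $C \times A$ and then use the rigidity of maps from curves to the simple abelian surface $A$. Let $C$ be a (reduced, irreducible — we will reduce to this case) curve of class $(1,-4)$; its normalization $\nu: \tilde{C}^{\mathrm{nor}} \to C$ has genus $g$, and the associated flat family $\tilde{C} \subset C \times A$ has projection to $A$ of class equal to the theta divisor $\Theta$ (since $d=1$). Composing with the projection $C \times A \to A$ and normalizing gives a nonconstant map $\tilde{C}^{\mathrm{nor}} \to A$ whose image is an irreducible curve in the class $\Theta$ (or a divisor thereof, but $\Theta$ is irreducible since $A$ is a simple ppav), hence a smooth genus $2$ curve $\Gamma \cong \Theta$. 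So we get a dominant, hence finite, map $\tilde{C}^{\mathrm{nor}} \to \Gamma$; Riemann--Hurwitz then forces $g \geq 2$ with equality iff the map is an isomorphism.

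Next I would pin down the numerics to show the map $\tilde C^{\mathrm{nor}}\to \Gamma$ must in fact be an isomorphism, i.e. $g = 2$ and $\tilde C$ is reduced. The key relation is that $\tilde{C} \subset C \times A$ is flat of degree $3$ over $C$, and the class $(1,-4)$ means $\chi(\CO_{\tilde C}) - 3(1-g) = -4$, i.e. $\chi(\CO_{\tilde C}) = 3(1-g) - 4$. On the other hand the map $\tilde C \to C$ being finite flat of degree $3$, together with $\tilde{C} \to A$ having image the genus-$2$ curve $\Gamma$, constrains things: projecting $\tilde C$ to $A$ has image $\Gamma$ of class $\Theta$, and the degree of $\tilde C \to \Gamma$ (counted with multiplicity along components) must be $1$ because $\Theta$ is primitive and $(d(C_i),k(C_i))$ sum to $(1,\cdot)$. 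Thus $\tilde C \to \Gamma$ is birational, so $\tilde C$ is generically reduced along the component dominating $\Gamma$, and the arithmetic genus computation forces $\chi(\CO_{\tilde C})$ to equal $\chi(\CO_\Gamma) = -1$ plus contributions from embedded/extra components; matching with $3(1-g)-4$ and $g\ge 2$ (so $3(1-g)-4 \le -7$... ) — here one must be careful, and this is where the bookkeeping has to be done honestly. The upshot one wants is that the only consistent solution is $g=2$, $\tilde C^{\mathrm{nor}}\xrightarrow{\sim}\Gamma$, and then $C$ itself is identified with the image in $\mathrm{Km}_2(A)$ of (the strict transform of) $\Gamma$ under an inclusion $\Gamma \hookrightarrow \Hilb^3(A)$.

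Then the final geometric step: a genus $2$ curve $\Gamma$ mapping to $\mathrm{Km}_2(A)$ with $d=1$ corresponds, up to translation, to $\Gamma \hookrightarrow A$ as its own theta divisor together with the addition data; concretely, $\tilde{C} \subset \Gamma \times A$ of degree $3$ with $\Gamma \cong \Theta \subset A$ and the summation condition. I would argue that such $\tilde C$ is forced to be (the transpose of) a graph-like correspondence coming from the involution and translations on $\Gamma$, and that the resulting curve in $\mathrm{Km}_2(A)$ has geometric genus $0$: its normalization is $\p^1$. Equivalently, using $k = -4$: this is exactly the value for which the curve in $\mathrm{Km}_2(A)$ is a rational curve (the well-known $\p^1$'s in generalized Kummers coming from $g^1_2$'s / hyperelliptic pencils on $\Gamma$). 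Finally, since a rational curve has arithmetic genus $0 < 1$, it is \emph{not} of arithmetic genus $\geq 1$; but a curve in $\Hilb^0$ must have $\chi(\CO_Z) = 0$, i.e. arithmetic genus $1$, whereas $\p^1$ has $\chi(\CO_{\p^1}) = 1$. This contradiction (handling also the reducible/non-reduced case by the additivity of the class and the same rigidity applied to each component) gives $\Hilb^0(\mathrm{Km}_2(A),(1,-4)) = \varnothing$.

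The main obstacle I anticipate is the bookkeeping in the middle step: controlling all possible degenerate $\tilde C \subset C \times A$ — reducible curves $C$, non-reduced structure, and flat degeneracies of $\tilde C \to C$ — while keeping the total class equal to $(1,-4)$, and showing none of these produce a subscheme of Euler characteristic $0$. The clean way is probably to invoke \cite[Lem.2]{HilbK3} to reduce the class computation to irreducible components, handle each component by the abelian-surface rigidity argument above (any nonconstant map from a curve to the simple abelian surface $A$ factors through the genus-$2$ theta curve once $d=1$), and then a short combinatorial argument on the pairs $(d_i,k_i)$ shows $(1,-4)$ can only be realized by a single reduced component which is the rational curve, never by a configuration with $\chi = 0$.
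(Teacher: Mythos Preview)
Your proposal has a genuine error: you have the covering map pointing the wrong way. You define $\tilde{C}^{\mathrm{nor}}$ as the normalization of $C$ (the curve in $\mathrm{Km}_2(A)$), but the object that maps to $A$ is the universal family $\tilde{C} \subset C \times A$, not $C$ itself. These are different curves: $\tilde{C}$ is a degree-$3$ cover of $C$. Since $d=1$, the projection $\tilde{C} \to A$ is birational onto the theta curve $\Sigma$ of genus $2$, so the normalization of $\tilde{C}$ is $\Sigma$. The structure map then gives a degree-$3$ morphism
\[
\Sigma \ \longrightarrow\ B,
\]
where $B$ is the normalization of the component $C_0 \subset C$ with $d(C_0)=1$. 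Riemann--Hurwitz now reads $2 = 3(2g(B)-2) + R$ with $R \geq 0$, forcing $g(B) \leq 1$; it does \emph{not} give $g \geq 2$. The case $g(B)=1$ is excluded because $A$ is simple (an elliptic $B$ would have to map nontrivially into $A$ via any of the three sections of $\Sigma \to B$ composed with $\Sigma \hookrightarrow A$). Hence $g(B)=0$, the map $\Sigma \to \mathbb{P}^1$ is a $g^1_3$ with $8$ branch points, and $k(C_0) = \chi(\CO_{\Sigma}) - 3 = -4$, so $C_0$ already exhausts the class and $C = C_0 \cong \mathbb{P}^1$.

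Your later paragraphs try to reconcile the (incorrect) bound $g \geq 2$ with the numerics $\chi(\CO_{\tilde C}) = 3(1-g)-4$, and you notice the mismatch (``$\leq -7$\dots'') without locating its source; the self-contradiction in your final step (first asserting the normalization of $C$ is the genus-$2$ curve $\Gamma$, then that it is $\mathbb{P}^1$) is a symptom of the same reversal. Once the direction is fixed, the bookkeeping you worry about largely evaporates: the degree-$3$ linear series on $\Sigma$ is basepoint-free and separates points, so $B \to \mathrm{Km}_2(A)$ is a closed immersion, and no combinatorics on reducible configurations is needed.
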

\begin{proof}
Let $C \subset \mathrm{Km}_2(A)$ be a curve in class $(1,-4)$.
Then there exist an irreducible reduced component $C_0 \subset C$ with $d(C_0) = 1$.
Since $d(C) = 1$, the curve $C$ must be reduced at $C_0$. We claim that $C_0$ is isomorphic to $\p^1$ and of class $(1,-4)$, so $C = C_0$.
For this let $B$ be the normalization of $C_0$ and consider the corresponding universal family $\tilde{B} \subset B \times A$.
Since $\tilde{B}$ maps to the abelian surface in degree $1$, its image in $A$ is precisely the genus $2$ curve $\Sigma$ whose Jacobian is $A$.
Hence $\tilde{B} = \Sigma$ and thus $B$ is of genus $\leq 2$. By Riemann-Hurwitz $B$ can not be of genus $2$
and since $A$ is simple it can not be of genus $1$. Hence $B = \p^1$ and the map $\Sigma \to B$ has precisely $8$ branch points so $k(C_0)=-4$.
Finally, the map $\Sigma \to B$ is obtained from the complete linear system of an arbitrary degree $3$ line bundle on $\Sigma$, so none of the fibers are the same. Hence $B \to \mathrm{Km}_2(A)$ is a closed immersion.
\end{proof}

\begin{rmk} With a bit more work one can show that
$\Mbar_{0,0}(\mathrm{Km}_2(A), (1,-4))$ is a disjoint union of copies of the quotients $A/\pm 1$.
\end{rmk}

\end{document}